\title[Limit points for   Browder spectrum  of  operator matrices  ]
{ Limit points for   Browder spectrum   of  operator matrices  }
\author[ A. Tajmouati, M. Karmouni and S.Alaoui Chrifi ]
{  A. Tajmouati, M. Karmouni and S. Alaoui Chrifi}
\address{ A. Tajmouati, S. Alaoui Chrifi\, \newline
	Sidi Mohamed Ben Abdellah
	University,
	Faculty of Sciences Dhar Al Mahraz, Laboratory of Mathematical Analysis and Applications, Fez, Morocco.}
\email{abdelaziz.tajmouati@usmba.ac.ma}
\email{safae.alaouichrifi@usmba.ac.ma}
\address{M. Karmouni\newline
Cadi Ayyad University, Multidisciplinary Faculty, Safi, Morocco.}
\email{med89karmouni@gmail.com}
\subjclass[2010]{47A10, 47A11}
\keywords{ Browder operator, Browder spectrum, Limit point, Upper-triangular operator matrix.}
\newtheorem{theorem}{Theorem}[section]
\newtheorem{definition}{Definition}[section]
\newtheorem{lemma}{Lemma}[section]
\newtheorem{corollary}{Corollary}[section]
\newtheorem{example}{Example}
\begin{document}
\maketitle
\begin{abstract}
Let $A\in \mathcal{B}(X)$ and $B\in \mathcal{B}(Y)$, where $X$ and $Y$ are Banach spaces, and let $M_{C}$ be an operator acting on $X\oplus Y$ given by $M_C=\begin{pmatrix}
A & C \\
0 & B \\
\end{pmatrix}$. We investigate the limit point set of the Browder spectrum of $M_{C}$. It is shown that
 $$ acc \sigma_{b}(M_C)\cup W_{acc \sigma_{b}}= acc \sigma_{b}(A)\cup acc \sigma_{b}(B)$$
where $W_{acc \sigma_{b}}$ is a subsets of $ acc\sigma_{*}(B)\cap acc\sigma_{*}(A)$ and a union of certain holes in $ acc \sigma_{b}(M_C)$. Furthermore, several sufficient conditions for  $acc\sigma_{b}(M_C)=acc\sigma_{b}(A)\cup acc\sigma_{b}(B)$ holds for every $C\in \mathcal{B}(Y,X)$ are given.
\end{abstract}
\section{Introduction and Preliminaries }

Throughout this paper, $X$ and $Y$ denote infinite dimensional complex Banach spaces, and $\mathcal{B}(X,Y)$ denotes the complex algebra of all bounded linear operators from $X$ to $Y$. When $Y=X$ we simply write $\mathcal{B}(X)$ instead of $\mathcal{B}(X, X)$. For $T\in \mathcal{B}(X,Y)$ we use $R(T)$ and $N(T)$ to denote the range and the null space of $T$, respectively. Write $\alpha (T)=\mbox{dim N}(T)$ and $\beta(T)=\mbox{codim R}(T)$. Sets of upper semi-Fredholm operators, lower semi-Fredholm operators, left semi-Fredholm operators and right semi-Fredholm operators are defined respectively as $\Phi_{+}(X)=\{T\in \mathcal{B}(X): \alpha(T)<\infty \mbox{ and  } R(T) \mbox{  is closed}\}$, $\Phi_{-}(X)=\{T\in \mathcal{B}(X): \beta(T) <\infty\}$,
$\Phi_{l}(X)=\{T\in \mathcal{B}(X): \alpha(T)<\infty \mbox{ and  } R(T) \mbox{ is closed and complemented subspace of X}\}$ and $\Phi_{r}(X)=\{T\in \mathcal{B}(X): \beta(T) <\infty  \mbox{ and } N(T) \mbox{ is a complemented subspace of X}\}$. Sets of semi-Fredholm and Fredholm operators are defined as $\Phi_{\pm}(X)=\Phi_{+}(X)\cup \Phi_{-}(X)$ and $\Phi(X)=\Phi_{+}(X)\cap \Phi_{-}(X)$.

The ascent of $T$ is defined as $\mbox{asc}(T)=\mbox{inf}\{n\in \mathbb{N}:N(T^n)=N(T^{n+1})\}$ and the descent of $T$ is defined as $\mbox{des}(T)=\mbox{inf}\{n\in \mathbb{N}: R(T^n)=R(T^{n+1})\}$. Where $\mbox{inf } \emptyset=\infty$. Sets of upper semi-Browder operators, lower semi-Browder operators, left semi-Browder operators and right semi-Browder operators are defined respectively $B_{+}(X)=\{T\in \Phi_{+}(X): \mbox{asc}(T)<\infty \}$, $B_{-}(X)=\{T\in \Phi_{-}(X): \mbox{dec}(T)<\infty \}$, $B_{l}(X)=\{T\in \Phi_{l}(X): \mbox{asc}(T)<\infty \}$ and $B_{r}(X)=\{T\in \Phi_{r}(X): \mbox{dec}(T)<\infty \}$. Sets of semi-Browder and Browder operators are defined as $B_{\pm}(X)=B_{+}(X)\cup B_{-}(X)$ and $B(X)=B_{+}(X)\cap B_{-}(X)$.
For $T \in \mathcal{B}(X)$, the essential spectrum, the upper semi-Browder spectrum, the lower semi-Browder spectrum, the semi-Browder spectrum, the left semi-Browder spectrum, the right semi-Browder spectrum and the Browder spectrum  are defined by:
\begin{align*}
	\sigma_{e}(T) &=\{\lambda \in \mathbb{C}: \lambda I-T\> \notin \Phi(X)\},\\
	\sigma_{b_{+}}(T) &=\{\lambda \in \mathbb{C}: \lambda I-T\> \notin B_{+}(X)\},\\
	\sigma_{b_{-}}(T) &=\{\lambda \in \mathbb{C}: \lambda I-T\> \notin B_{-}(X)\},\\
	\sigma_{b_{\pm}}(T) &=\{\lambda \in \mathbb{C}: \lambda I-T\> \notin B_{\pm}(X)\},\\
	\sigma_{lb}(T) &=\{\lambda \in \mathbb{C}: \lambda I-T\> \notin B_{l}(X)\},\\
	\sigma_{rb}(T) &=\{\lambda \in \mathbb{C}: \lambda I-T\> \notin B_{r}(X)\},\\
	\sigma_{b}(T) &=\{\lambda \in \mathbb{C}: \lambda I-T\> \notin B(X)\}.
\end{align*}
Evidently, $\sigma_{b_{+}}(T)=\sigma_{b_{-}}(T^{*})$ and $\sigma_{b_{-}}(T)=\sigma_{b_{+}}(T^{*})$, where $T^{*}\in \mathcal{B}(X^{*})$ the adjoint operator of $T$ on the dual space $X^{*}$ and $\sigma_{b}(T)=\sigma_{b_{+}}(T)\cup \sigma_{b_{-}}(T)$ consequently $\sigma_{b}(T)=\sigma_{b}(T^{*})$.

Recall that an operator $T \in \mathcal{B}(X)$ is said to have the single valued extension property at $\lambda_{0}\in \mathbb{C}$ (abbreviated SVEP) if for every open neighborhood $U\subseteq \mathbb{C}$ of $\lambda_{0}\in \mathbb{C}$, the only analytic function $f:U\rightarrow X$ which satisfies the equation $(T-\lambda I)f(\lambda)=0$ for all $\lambda\in U$ is the function $f=0$. Denote by $S(T)$ the open set of $\lambda\in \mathbb{C}$ where $T$  fails to have the SVEP at $\lambda$. An operator $T$ is said to have the SVEP if $T$ has the SVEP at every $\lambda\in \mathbb{C}$, in this case $S(T)=\emptyset$. According to \cite[Theorem 3.52]{r.10} we have
$$\sigma_{b}(T)=\sigma_{b_{+}}(T)\cup S(T^{*})= \sigma_{b_{-}}(T)\cup S(T).$$

For a compact subset $K$ of $\mathbb{C}$,  let acc$K$, int$K$, iso$K$, $K^{c}$, $\partial K$, $\overline{K}$ and $\eta(K)$ be the set of all accumulation points, the interior set, the set of isolated points, the complement, the boundary, the closure and the polynomially convex hull of $K$ respectively.

An operator $T$ is called Drazin invertible if there exists $S\in \mathcal{B}(X)$
such that
$$TS=ST,\>STS=S\>\>\mbox{and}\>\>TST-T\>\>\mbox{is nilpotent},$$
 it is well known that $S$ exists if and only if $p=asc(T)=des(T)$. The set
$\sigma_{D}(T)=\{\lambda\in\mathbb{C} : T-\lambda I \> \mbox{is not Drazin invertible} \}$ is the Drazin spectrum. Although Browder operators are exactly Fredholm Drazin invertible operators. Koliha \cite{h2} generalized the concept of Drazin invertible to generalized Drazin invertible, in fact $T \in \mathcal{B}(X)$ is generalized Drazin invertible if there exists $S\in \mathcal{B}(X)$
such that
$$TS=ST,\>STS=S\>\>\mbox{and}\>\>TST-T\>\>\mbox{is quasi-nilpotent},$$
 (i.e. $\sigma(TST-T)=\{0\}$). The former author realized that $T$ is generalized Drazin invertible if and only if $0\notin acc \sigma(T)$. The set
$\sigma_{gD}(T)=\{\lambda\in\mathbb{C} : T-\lambda I \> \mbox{is not  generalized Drazin invertible} \}=acc \sigma(T)=acc \sigma_{D}(T)$ is the generalized Drazin spectrum. For more details about those inverses we refer the reader to (\cite{Xi}, \cite{h2}, \cite{ZZJ} and \cite{ZL}). It is common knowledge that the spectrum $\sigma_{*}=\sigma_{e}, \sigma_{b},\sigma_{b_{+}}$,$\sigma_{b_{-}}$,$\sigma_{lb}$ or $\sigma_{rb}$ is a compact subset of the complex plan $\mathbb{C}$, and $\sigma_{D}(T)$ or $\sigma_{gD}(T)$ are closed subsets of $\mathbb{C}$ possibly empty.

We can easily show that
$$acc \sigma_{b}(T)=acc \sigma_{e}(T)\cup acc \sigma_{D}(T)=acc \sigma_{e}(T)\cup \sigma_{gD}(T).$$
The set $acc \sigma_{b}(T)$ may be empty, for example when $T$ is polynomially Riesz (i.e there exists a non-zero complex polynomial $P$ such that $P(T)$ is a Riesz operator). Then $P(\sigma_{b}(T))=\sigma_{b}(P(T))=\{0\}$, as a result $\sigma_{b}(T)=P^{-1}\{0\}$ is a finite set which has no accumulation point.

If $A\in\mathcal{B}(X)$, $B\in\mathcal{B}(Y)$ and $C\in\mathcal{B}(Y,X)$ then $M_C\in\mathcal{B}(X\oplus Y)$ represents a bounded linear operator on Banach space $X\oplus Y$ given by:
$$
M_C=\begin{pmatrix}
A & C \\
0 & B \\
\end{pmatrix}
$$
it called upper triangular operator matrix.
It is well know that in the case of infinite dimensional, the inclusion $\sigma(M_C)\subset\sigma(A)\cup\sigma(B)$, may be strict.
This attracts the attention of many mathematicians to study the defect ($\sigma_{*}(A)\cup\sigma_{*}(B))\setminus \sigma_{*}(M_C)$ where $\sigma_{*}$ runs over different  type of spectra.\\
 In \cite{ZHL} S.Zhang et al, gave a description of the set $\displaystyle \bigcap_{C\in\mathcal{B}(X, Y)}\sigma_b(M_C)$. They showed the following theorem which we are going to need in the sequel.

  \begin{theorem}\label{ZHL}\cite{ZHL}.
  	For given $(A, B)\in \mathcal{B}(X)\times \mathcal{B}(Y)$ the following holds:
  	
  	$$\displaystyle \bigcap_{C\in\mathcal{B}(Y, X)}\sigma_b(M_C)=\sigma_{lb}(A)\cup\sigma_{rb}(B)\cup W_{0}(A, B)$$
  	Where $W_{0}(A,B)=\{\lambda\in \mathbb{C}: N(A-\lambda)\times N(B-\lambda) \mbox{ is not isomorphic to } X/ {R(A-\lambda)}\times Y/ {R(B-\lambda)}\}.$
  \end{theorem}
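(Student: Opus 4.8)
The plan is to pass, as usual, to the origin: for $\lambda\in\mathbb{C}$ the operator $M_C-\lambda$ is again an upper triangular operator matrix with diagonal entries $A-\lambda$, $B-\lambda$ and corner $C$, and the sets $\sigma_{lb}(A)$, $\sigma_{rb}(B)$, $W_0(A,B)$ translate accordingly, so it is enough to prove that there exists $C\in\mathcal{B}(Y,X)$ with $M_C\in B(X\oplus Y)$ if and only if
\[
A\in B_l(X),\qquad B\in B_r(Y)\qquad\text{and}\qquad N(A)\times N(B)\cong X/R(A)\times Y/R(B).
\]
We will use that $B(Z)=B_+(Z)\cap B_-(Z)$ (by definition) also equals $B_l(Z)\cap B_r(Z)$, since for a Fredholm operator the finite-dimensional kernel and the finite-codimensional range are automatically complemented; thus ``$M_C$ is Browder'' means ``$M_C\in\Phi(X\oplus Y)$ with $\mathrm{asc}(M_C)<\infty$ and $\mathrm{des}(M_C)<\infty$''.

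$(\Rightarrow)$ Suppose $M_C\in B(X\oplus Y)$. From the block form $M_C^n=\bigl(\begin{smallmatrix}A^n&\ast\\0&B^n\end{smallmatrix}\bigr)$ one reads off that $N(A^n)\times\{0\}=N(M_C^n)\cap(X\times\{0\})$ and that the projection of $R(M_C^n)$ onto the second coordinate equals $R(B^n)$; the first gives $\alpha(A)\le\alpha(M_C)<\infty$ and $\mathrm{asc}(A)\le\mathrm{asc}(M_C)<\infty$, and the second gives $\beta(B)\le\beta(M_C)<\infty$ (via the induced surjection onto $Y/R(B)$) together with, by a short chase at $n=\mathrm{des}(M_C),\mathrm{des}(M_C)+1$, $\mathrm{des}(B)\le\mathrm{des}(M_C)<\infty$. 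Since $M_C\in\Phi_l\subseteq\Phi_+$, applying the sequential criterion for $\Phi_+$ to unit vectors $(x,0)$ with $x$ in a closed complement of $N(A)$ shows that $R(A)$ is closed; hence $A\in B_+(X)$, $B\in B_-(Y)$, and the missing complementations of $R(A)$ and of $N(B)$ follow by a standard regularity argument from those of $R(M_C)$ and $N(M_C)$, so in fact $A\in B_l(X)$ and $B\in B_r(Y)$. For the isomorphism condition, fix decompositions $X=R(A)\oplus X_1$ and $Y=N(B)\oplus Y_0=R(B)\oplus Y_1$ and set $C'=P_{X_1}C|_{N(B)}\colon N(B)\to X_1$, where $P_{X_1}$ is the projection onto $X_1$ along $R(A)$ (so $X_1\cong X/R(A)$). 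A direct computation yields $\alpha(M_C)=\alpha(A)+\alpha(C')$ and $\beta(M_C)=\beta(B)+\beta(C')$, so $M_C\in\Phi$ forces $C'\in\Phi(N(B),X_1)$; splitting $N(B)$ and $X_1$ along $N(C')$ and a complement of $R(C')$ produces a Banach space $\widetilde N$ with $N(B)\cong\widetilde N\oplus\mathbb{C}^{\alpha(C')}$ and $X/R(A)\cong\widetilde N\oplus\mathbb{C}^{\beta(C')}$. Since $M_C$ is Browder it has index $0$, hence $\alpha(A)+\alpha(C')=\alpha(M_C)=\beta(M_C)=\beta(B)+\beta(C')$, and therefore
\[
N(A)\times N(B)\cong\mathbb{C}^{\alpha(A)+\alpha(C')}\times\widetilde N\cong\mathbb{C}^{\beta(B)+\beta(C')}\times\widetilde N\cong X/R(A)\times Y/R(B),
\]
i.e.\ $0\notin W_0(A,B)$.

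$(\Leftarrow)$ Assume $A\in B_l(X)$, $B\in B_r(Y)$ and $N(A)\times N(B)\cong X/R(A)\times Y/R(B)$. Fix $X=N(A)\oplus X_0=R(A)\oplus X_1$ and $Y=N(B)\oplus Y_0=R(B)\oplus Y_1$ with $A|_{X_0}\colon X_0\to R(A)$ and $B|_{Y_0}\colon Y_0\to R(B)$ invertible and $\dim N(A),\dim Y_1<\infty$. Reading the hypothesis as $\mathbb{C}^{\alpha(A)}\times N(B)\cong X_1\times\mathbb{C}^{\beta(B)}$ and compressing an implementing isomorphism yields a Fredholm operator $C'\colon N(B)\to X_1$; define $C\colon Y\to X$ by $C|_{N(B)}=C'$ (viewed as a map into $X_1\subseteq X$) and $C|_{Y_0}=0$. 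The computation of the previous paragraph then gives $M_C\in\Phi(X\oplus Y)$. To upgrade ``Fredholm'' to ``Browder'' one must still produce finite ascent and descent for $M_C$; here one uses that, since $A\in B_+$, $B\in B_-$ and $\Phi_l,\Phi_r$ are open, $A-\mu$ is left invertible and $B-\mu$ is right invertible for all sufficiently small $\mu\neq0$, and that the construction of $C$ can be arranged so that moreover $M_C-\mu$ is invertible on a punctured disc about $0$; with $M_C\in\Phi$ this gives $0\notin acc\,\sigma(M_C)$, so $M_C$ is Browder and $0\notin\sigma_b(M_C)$.

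The step I expect to be the main obstacle is the last one in $(\Leftarrow)$: producing a single $C$ that makes $M_C$ at once Fredholm of index zero and Drazin invertible, that is, with finite ascent and descent and not merely with $\mathrm{asc}(A),\mathrm{des}(B)<\infty$. The difficulty is that the corner $C$ enters the powers of $M_C$, so $\mathrm{asc}(M_C)$ and $\mathrm{des}(M_C)$ are not controlled by $\mathrm{asc}(A)$ and $\mathrm{des}(B)$; the remedy is to build $C$ from the splittings above so carefully that $M_C$ is similar to the direct sum of an invertible operator on a subspace of finite codimension and a nilpotent operator on a finite-dimensional complement --- precisely the structure of a Fredholm, Drazin invertible, hence Browder, operator --- and then to verify this by an explicit computation of $N(M_C^n)$ and $R(M_C^n)$. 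Keeping track of the Banach-space isomorphisms throughout --- in particular cancelling the finite-dimensional summands against the common infinite-dimensional factor $\widetilde N$, which is exactly why $W_0(A,B)$ has to be phrased as an isomorphism of spaces rather than an equality of (possibly infinite) dimensions --- is the technical heart of the argument.
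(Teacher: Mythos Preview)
The paper does not prove this theorem at all: it is quoted verbatim from \cite{ZHL} as a known result and is used only as a tool (in the proofs of Theorems \ref{tthh2} and the last theorem). There is therefore no ``paper's own proof'' to compare your attempt against; a full argument would have to be sought in \cite{ZHL}.

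That said, a few remarks on your sketch. Your forward direction is essentially sound: the block structure indeed yields $A\in\Phi_+(X)$ with $\mathrm{asc}(A)<\infty$ and $B\in\Phi_-(Y)$ with $\mathrm{des}(B)<\infty$, and the index-zero computation via the compressed corner $C'=P_{X_1}C|_{N(B)}$ is the standard way to get the isomorphism $N(A)\times N(B)\cong X/R(A)\times Y/R(B)$. The passage from $\Phi_+$, $\Phi_-$ to $\Phi_l$, $\Phi_r$ does need a word, but your ``regularity argument'' is correct in spirit: complementation of $R(A)$ and $N(B)$ can be read off from complementation of $R(M_C)$ and $N(M_C)$ together with the finite-dimensionality of $N(A)$ and $Y/R(B)$.

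The backward direction, however, has the gap you yourself flag. Producing $C$ with $M_C\in\Phi(X\oplus Y)$ and $\mathrm{ind}(M_C)=0$ is not enough; you must force $\mathrm{asc}(M_C),\mathrm{des}(M_C)<\infty$, and your sentence ``the construction of $C$ can be arranged so that moreover $M_C-\mu$ is invertible on a punctured disc about $0$'' is precisely the missing content. Concretely: with $A-\mu$ merely left invertible and $B-\mu$ merely right invertible for small $\mu\neq0$, invertibility of $M_C-\mu$ genuinely depends on how $C$ interacts with the (possibly infinite-dimensional) spaces $X/R(A-\mu)$ and $N(B-\mu)$ as $\mu$ varies, and your fixed $C$ built from the splittings at $\mu=0$ does not obviously control this. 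The actual argument in \cite{ZHL} constructs $C$ more carefully, using the Kato decompositions of $A$ and $B$ coming from $\mathrm{asc}(A)<\infty$ and $\mathrm{des}(B)<\infty$ (so that on cofinite-dimensional invariant subspaces $A$ is bounded below and $B$ is onto), and it is this additional structure---not just the single splitting at $0$---that delivers Browderness of $M_C$.
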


  In \cite{ZZL}, authors investigate the filling-in-holes problem of $2\times2$ upper triangular operator matrices for Browder spectrum, they showed the following theorem.

   \begin{theorem}\cite{ZZL}\label{ZZL2}
   	Let $(A, B)\in \mathcal{B}(X)\times \mathcal{B}(Y)$  and  $C\in\mathcal{B}(Y, X)$. Then
   	$$\sigma_{b}(M_C)\cup W_b=\sigma_{b}(A)\cup\sigma_{b}(B)$$
   	where $W_b$ is the union of certain holes in $\sigma_{b}(M_C)$, which happen to be subsets of $\sigma_{b}(A)\cap\sigma_{b}(B)$.
   \end{theorem}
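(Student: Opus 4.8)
The plan is to run the classical ``boundary plus connectedness'' argument that establishes the analogous filling-in-holes statement for the ordinary spectrum, adapted here to $\sigma_{b}$. It rests on three ingredients: (i) the inclusion $\sigma_{b}(M_C)\subseteq\sigma_{b}(A)\cup\sigma_{b}(B)$; (ii) the boundary inclusion $\partial\big(\sigma_{b}(A)\cup\sigma_{b}(B)\big)\subseteq\sigma_{b}(M_C)$; and (iii) a ``two out of three'' principle: among $A-\lambda$, $B-\lambda$ and $M_C-\lambda$, if two are Browder then so is the third.

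First I would establish (iii), which is the technical heart and also yields (i). The pivotal fact is the characterisation that $T-\lambda$ is Browder if and only if $T-\lambda$ is Fredholm and $\lambda\notin\mathrm{acc}\,\sigma(T)$; the non-obvious implication here uses that no quasi-nilpotent operator on an infinite dimensional space is Fredholm, so that if $\lambda$ is isolated in $\sigma(T)$ and $T-\lambda$ is Fredholm, then the quasi-nilpotent part of $T-\lambda$ acts on a finite dimensional space and $\lambda$ is a pole. Granting this, fix $\lambda$ with $B-\lambda$ Browder. Then $\lambda\notin\mathrm{acc}\,\sigma(B)$, so $B-\mu$ is invertible on a punctured neighbourhood of $\lambda$; on that neighbourhood $M_C-\mu$ is invertible precisely when $A-\mu$ is, hence $\sigma(M_C)$ and $\sigma(A)$ agree there and $\lambda\in\mathrm{acc}\,\sigma(M_C)\iff\lambda\in\mathrm{acc}\,\sigma(A)$. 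Combining this with the classical two-out-of-three for Fredholmness of an upper triangular operator matrix (with additive index) gives: $M_C-\lambda$ is Browder $\iff$ $A-\lambda$ is Browder. The case where $A-\lambda$ is the one assumed Browder follows by the same argument applied to $M_C^{*}$, which after swapping the two summands is upper triangular with diagonal $(B^{*},A^{*})$, together with $\sigma_{b}(T)=\sigma_{b}(T^{*})$. In particular, if $A-\lambda$ and $B-\lambda$ are Browder then $M_C-\lambda$ is Fredholm of index $0$ and, since $\sigma(M_C)\subseteq\sigma(A)\cup\sigma(B)$, also $\lambda\notin\mathrm{acc}\,\sigma(M_C)$, so $M_C-\lambda$ is Browder; this proves (i).

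For (ii), using $\partial(E\cup F)\subseteq\partial E\cup\partial F$ for closed sets $E,F$, it suffices to show $\partial\sigma_{b}(A)\subseteq\sigma_{lb}(A)$ and $\partial\sigma_{b}(B)\subseteq\sigma_{rb}(B)$, because then $\partial(\sigma_{b}(A)\cup\sigma_{b}(B))\subseteq\sigma_{lb}(A)\cup\sigma_{rb}(B)\subseteq\bigcap_{C}\sigma_{b}(M_C)\subseteq\sigma_{b}(M_C)$ by Theorem \ref{ZHL}. I will actually prove $\partial\sigma_{b}(A)\subseteq\sigma_{b_{+}}(A)$, which lies in $\sigma_{lb}(A)$ since $B_{l}(X)\subseteq B_{+}(X)$. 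Suppose $\lambda\in\partial\sigma_{b}(A)$ with $A-\lambda\in B_{+}(X)$, and pick $\mu_n\to\lambda$ with $A-\mu_n$ Browder. Since $A-\lambda\in\Phi_{+}(X)$ and the index is locally constant on the semi-Fredholm region, $\mathrm{ind}(A-\lambda)=\mathrm{ind}(A-\mu_n)=0$ for $n$ large, so $A-\lambda$ is Fredholm of index zero; a Fredholm operator of index zero with finite ascent is Browder, so $A-\lambda\in B(X)$, contradicting $\lambda\in\sigma_{b}(A)$. The inclusion $\partial\sigma_{b}(B)\subseteq\sigma_{b_{-}}(B)\subseteq\sigma_{rb}(B)$ is the dual statement (descent in place of ascent, or pass to adjoints).

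Finally I would assemble the theorem. By (i), $\sigma_{b}(A)\cup\sigma_{b}(B)=\sigma_{b}(M_C)\cup\mathcal{D}$ where $\mathcal{D}:=(\sigma_{b}(A)\cup\sigma_{b}(B))\setminus\sigma_{b}(M_C)$; set $W_{b}:=\mathcal{D}$. Given $\lambda_0\in\mathcal{D}$, let $\Omega$ be the connected component of $\mathbb{C}\setminus\sigma_{b}(M_C)$ containing $\lambda_0$. If $\Omega$ met the open set $\mathbb{C}\setminus(\sigma_{b}(A)\cup\sigma_{b}(B))$, then $\Omega$, being connected and containing the point $\lambda_0$ of $\sigma_{b}(A)\cup\sigma_{b}(B)$, would meet $\partial(\sigma_{b}(A)\cup\sigma_{b}(B))\subseteq\sigma_{b}(M_C)$ by (ii), which is impossible. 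Hence $\Omega\subseteq\sigma_{b}(A)\cup\sigma_{b}(B)$; in particular $\Omega$ is bounded, so it is a hole of $\sigma_{b}(M_C)$, and $\Omega\subseteq\mathcal{D}$. Thus $W_{b}$ is a union of holes of $\sigma_{b}(M_C)$. Moreover each such hole $\Omega$ is contained in $\sigma_{b}(A)\cap\sigma_{b}(B)$: if some $\lambda_0\in\Omega$ had, say, $A-\lambda_0$ Browder, then $M_C-\lambda_0$ is Browder as well, so (iii) forces $B-\lambda_0$ Browder, whence $\lambda_0\notin\sigma_{b}(A)\cup\sigma_{b}(B)$ --- contradicting $\Omega\subseteq\sigma_{b}(A)\cup\sigma_{b}(B)$. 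The main obstacle is ingredient (iii) together with the precise semi-Browder boundary inclusions in (ii); once the ``Browder $=$ Fredholm $+$ $(\lambda\notin\mathrm{acc}\,\sigma)$'' description and the index/ascent bookkeeping are set up, the topological part of the proof is routine.
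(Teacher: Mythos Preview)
The paper does not prove this theorem at all: it is quoted from \cite{ZZL} and used as a black box (the authors even invoke ``from the proof of Theorem~\ref{ZZL2}'' to import the boundary inclusion $\partial(\sigma_{b}(A)\cup\sigma_{b}(B))\subseteq\sigma_{b}(M_C)$ into their proof of Theorem~\ref{th2}). So there is no in-paper proof to compare against.

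That said, your reconstruction is correct and is precisely the standard route taken in \cite{ZZL} and in the earlier Weyl/essential-spectrum analogues (e.g.\ \cite{Lee}, \cite{hl}): one combines the inclusion $\sigma_{b}(M_C)\subseteq\sigma_{b}(A)\cup\sigma_{b}(B)$, the boundary inclusion $\partial(\sigma_{b}(A)\cup\sigma_{b}(B))\subseteq\sigma_{b}(M_C)$, and the two-out-of-three Browder principle, and then runs the connectedness argument to see that the defect set is a union of bounded components of $\mathbb{C}\setminus\sigma_{b}(M_C)$ lying inside $\sigma_{b}(A)\cap\sigma_{b}(B)$. Your derivation of $\partial\sigma_{b}(A)\subseteq\sigma_{b_{+}}(A)\subseteq\sigma_{lb}(A)$ via local constancy of the index on the semi-Fredholm region, followed by the appeal to Theorem~\ref{ZHL} to land in $\sigma_{b}(M_C)$, is exactly the mechanism used in the literature; and your proof of (iii) through the characterisation ``Browder $=$ Fredholm $+$ not an accumulation point of $\sigma$'' is a clean way to get the two-out-of-three statement (equivalent to the combination of \cite[Lemma~2.4]{DH}, \cite[Corollary~5]{hl} and \cite[Lemma~2.3]{Xi} that the present paper cites). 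One cosmetic remark: in the last paragraph the clause ``then $M_C-\lambda_0$ is Browder as well'' reads as if it were a consequence of $A-\lambda_0$ being Browder, whereas it actually comes from $\lambda_0\in\Omega\subseteq\mathbb{C}\setminus\sigma_{b}(M_C)$; rephrasing that sentence would make the logic clearer.
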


   The next lemma has been demonstrated in \cite{ZZJ}:

 \begin{lemma}\cite{ZZJ}\label{key}
 	For given $(A, B)\in \mathcal{B}(X)\times \mathcal{B}(Y)$ if $M_{C}$ is Drazin invertible for some $C\in\mathcal{B}(Y, X)$ then:
 	\begin{enumerate}
 	\item[(i)] $des(B)<\infty$ and $asc(A)<\infty.$
 	\item[(ii)] $des(A^{*})<\infty$ and $asc(B^{*})<\infty$.
 	\end{enumerate}
 	
 \end{lemma}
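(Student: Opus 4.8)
The plan is to use the standard power formula for upper triangular operator matrices: for each $n\ge 1$ there is $C_{n}\in\mathcal{B}(Y,X)$ with $M_C^{\,n}=\begin{pmatrix} A^{n} & C_{n}\\ 0 & B^{n}\end{pmatrix}$. Since $M_C$ is Drazin invertible we have $p:=\mathrm{asc}(M_C)=\mathrm{des}(M_C)<\infty$, and the whole argument amounts to transporting the stabilization identities $N(M_C^{\,p})=N(M_C^{\,p+1})$ and $R(M_C^{\,p})=R(M_C^{\,p+1})$ down to $A$ and $B$ respectively for (i), and then deducing (ii) by passing to adjoints and reapplying (i).

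For (i), observe first that $(x,0)\in N(M_C^{\,n})$ if and only if $A^{n}x=0$, so $N(M_C^{\,n})\cap(X\oplus\{0\})=N(A^{n})\oplus\{0\}$ for every $n$; intersecting $N(M_C^{\,p})=N(M_C^{\,p+1})$ with $X\oplus\{0\}$ gives $N(A^{p})=N(A^{p+1})$, hence $\mathrm{asc}(A)\le p<\infty$. Dually, if $P_{Y}\colon X\oplus Y\to Y$ is the coordinate projection, then $P_{Y}M_C^{\,n}(x,y)=B^{n}y$, so $P_{Y}\big(R(M_C^{\,n})\big)=R(B^{n})$ for every $n$; applying $P_{Y}$ to $R(M_C^{\,p})=R(M_C^{\,p+1})$ yields $R(B^{p})=R(B^{p+1})$, hence $\mathrm{des}(B)\le p<\infty$. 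This settles (i), and note that only $\mathrm{asc}(M_C)<\infty$ is used for the first half and only $\mathrm{des}(M_C)<\infty$ for the second.

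For (ii), the key point is that $M_C^{*}$ is again Drazin invertible: the adjoint of a Drazin inverse of $M_C$ is a Drazin inverse of $M_C^{*}$ (equivalently, the bounded idempotent commuting with $M_C$ that splits $X\oplus Y$ into a part on which $M_C$ is nilpotent and a part on which it is invertible dualizes, via annihilators, to the analogous idempotent for $M_C^{*}$, using that $R(M_C^{\,p})$ is closed so that $N(M_C^{\,p})^{\perp}=R\big((M_C^{*})^{p}\big)$ and $R(M_C^{\,p})^{\perp}=N\big((M_C^{*})^{p}\big)$). Computed on $(X\oplus Y)^{*}=X^{*}\oplus Y^{*}$ one has $M_C^{*}=\begin{pmatrix} A^{*} & 0\\ C^{*} & B^{*}\end{pmatrix}$, and conjugating by the canonical flip $X^{*}\oplus Y^{*}\to Y^{*}\oplus X^{*}$ turns this into the \emph{upper} triangular operator matrix $\begin{pmatrix} B^{*} & C^{*}\\ 0 & A^{*}\end{pmatrix}$ on $Y^{*}\oplus X^{*}$, which is therefore Drazin invertible as well. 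Applying part (i) to this matrix, with the role of $A$ played by $B^{*}$ and that of $B$ played by $A^{*}$, gives $\mathrm{asc}(B^{*})<\infty$ and $\mathrm{des}(A^{*})<\infty$, which is exactly (ii). The only genuinely non-formal step in the argument is this stability of Drazin invertibility under adjunction; everything else is the routine bookkeeping of kernels and ranges through the triangular form, so that is where I would expect to have to be careful.
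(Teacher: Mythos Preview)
Your argument is correct. Note, however, that the paper does not supply its own proof of this lemma: it is simply quoted from \cite{ZZJ}, introduced by ``The next lemma has been demonstrated in \cite{ZZJ}'', with no argument given in the present paper. So there is nothing here to compare your proof against.

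For the record, the route you take---computing $M_C^{\,n}$ in block upper-triangular form, reading off $N(A^{n})$ as $N(M_C^{\,n})\cap(X\oplus\{0\})$ and $R(B^{n})$ as $P_{Y}\bigl(R(M_C^{\,n})\bigr)$ to get (i), and then passing to the adjoint (which is lower triangular, hence upper triangular after the coordinate flip $X^{*}\oplus Y^{*}\to Y^{*}\oplus X^{*}$) to reduce (ii) to (i)---is the natural and standard one. The only point worth a remark is the one you already flag: Drazin invertibility passes to adjoints because if $S$ satisfies $TS=ST$, $STS=S$ and $(TST-T)^{k}=0$, then $S^{*}$ satisfies the same three identities with $T^{*}$, so no appeal to closedness of ranges or annihilator calculus is actually needed there.
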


The purpose of this paper is to study the relationship between $acc\sigma_{b}(M_C)$ and $acc \sigma_{b}(A)\cup acc\sigma_{b}(B)$. We investigate the local spectral theory to prove the equality
$$acc \sigma_{b}(M_{C})\cup [S(A^{*})\cap S(B)] = acc \sigma_{b}(A)\cup acc \sigma_{b}(B).$$ Also,  we show that  the passage from $acc \sigma_{b}(A)\cup acc\sigma_{b}(B)$ to $acc\sigma_{b}(M_C )$ can be  described as follows: $$acc\sigma_{b}(M_C)\cup W_{acc\sigma_{b}}=acc\sigma_{b}(M_0)=acc\sigma_{b}(A)\cup acc\sigma_{b}(B)$$
where $W_{acc\sigma_{b}}$ is the union of certain holes in $acc\sigma_{b}(M_C)$, which happen to be subsets of $acc\sigma_{b}(A)\cap acc\sigma_{b}(B)$.
Finally we give sufficient conditions on $A$ and $B$ to ensure the equality $acc\sigma_{b}(M_C)=acc\sigma_{b}(A)\cup acc\sigma_{b}(B).$

\section{Main results and proofs}

In order to state precisely the relationship between $acc\sigma_{b}(M_C)$ and $acc\sigma_{b}(A)\cup  acc\sigma_{b}(B)$, we began this section by the following two lemmas which will be widely used in the sequel.
\begin{lemma}\label{L1}
Let $(A, B)\in \mathcal{B}(X)\times \mathcal{B}(Y)$  and  $C\in\mathcal{B}(Y, X)$. Then
$$acc\sigma_{b}(M_C)\subseteq acc\sigma_{b}(M_0)=acc\sigma_{b}(A)\cup  acc\sigma_{b}(B)$$
\end{lemma}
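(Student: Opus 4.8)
The plan is to establish the equality $acc\sigma_{b}(M_0)=acc\sigma_{b}(A)\cup acc\sigma_{b}(B)$ first, and then to prove the inclusion $acc\sigma_{b}(M_C)\subseteq acc\sigma_{b}(M_0)$. For the equality, since $M_0=A\oplus B$ is a direct sum, one has $\sigma_{b}(M_0)=\sigma_{b}(A)\cup\sigma_{b}(B)$ (the Browder resolvent of a direct sum is the intersection of the Browder resolvents, because $B(X\oplus Y)$-membership of $A\oplus B-\lambda$ is equivalent to membership of both $A-\lambda$ and $B-\lambda$ in their respective Browder classes — ascent, descent and the Fredholm indices all add). Then I would invoke the elementary fact that for two compact sets $K_1,K_2\subseteq\mathbb{C}$ one has $acc(K_1\cup K_2)=acc(K_1)\cup acc(K_2)$: if $\lambda$ is a limit of points of $K_1\cup K_2$ then infinitely many of those points lie in one of the two sets, hence $\lambda\in acc(K_1)\cup acc(K_2)$; the reverse inclusion is trivial. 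Applying this to $K_1=\sigma_b(A)$, $K_2=\sigma_b(B)$ gives $acc\sigma_b(M_0)=acc\sigma_b(A)\cup acc\sigma_b(B)$.

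For the inclusion, I would argue by passing to complements. Let $\lambda\notin acc\sigma_b(M_0)$, so there is a punctured open disc $D^{*}=D(\lambda,\varepsilon)\setminus\{\lambda\}$ with $D^{*}\cap\sigma_b(M_0)=\emptyset$, i.e. $\mu I-A\in B(X)$ and $\mu I-B\in B(Y)$ for every $\mu\in D^{*}$. The goal is to show that $\mu I-M_C\in B(X\oplus Y)$ for all $\mu\in D^{*}$, which gives $\lambda\notin acc\sigma_b(M_C)$. The key point is the standard upper-triangular perturbation fact: if $A-\mu$ and $B-\mu$ are both Browder, then $M_C-\mu$ is Browder for every $C$. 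This follows because $M_C-\mu=\begin{pmatrix}A-\mu & C\\ 0 & B-\mu\end{pmatrix}$ is the product of the invertible operators $\begin{pmatrix}I & C(B-\mu)^{-1}\\ 0 & I\end{pmatrix}$ and $\begin{pmatrix}I&0\\0&I\end{pmatrix}$-type adjustments only when $B-\mu$ is invertible, so in general I would instead use that Fredholmness of $M_C-\mu$ follows from Fredholmness of the diagonal (a well-known fact, with $\mathrm{ind}(M_C-\mu)=\mathrm{ind}(A-\mu)+\mathrm{ind}(B-\mu)$), together with the fact that $\mathrm{asc}(M_C-\mu)<\infty$ whenever $\mathrm{asc}(A-\mu)<\infty$ and $B-\mu$ is invertible, and dually for the descent; since $A-\mu,B-\mu$ Browder implies both are invertible or have finite ascent/descent and index zero, a routine case analysis yields $M_C-\mu\in B(X\oplus Y)$. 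Hence $D^{*}\cap\sigma_b(M_C)=\emptyset$ and $\lambda\notin acc\sigma_b(M_C)$.

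The main obstacle is the last step: showing directly that $M_C-\mu$ is Browder from the Browder-ness of $A-\mu$ and $B-\mu$ for \emph{arbitrary} $C$. The cleanest route is to recall that $\sigma_b(M_C)\subseteq\sigma_b(A)\cup\sigma_b(B)$ holds for every $C$ (this is classical; it is in fact contained in Theorem \ref{ZZL2}, whose statement gives $\sigma_b(M_C)\cup W_b=\sigma_b(A)\cup\sigma_b(B)$ and hence $\sigma_b(M_C)\subseteq\sigma_b(A)\cup\sigma_b(B)$). Using that inclusion one gets at once $D^{*}\cap\sigma_b(M_C)\subseteq D^{*}\cap(\sigma_b(A)\cup\sigma_b(B))=D^{*}\cap\sigma_b(M_0)=\emptyset$, so $\lambda\notin acc\sigma_b(M_C)$, which completes the proof without any case analysis. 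I would present the argument this way, citing Theorem \ref{ZZL2} for the inclusion $\sigma_b(M_C)\subseteq\sigma_b(A)\cup\sigma_b(B)$ and the elementary accumulation-point lemma for the equality.
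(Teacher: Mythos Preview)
Your proposal is correct and follows essentially the same approach as the paper: establish $\sigma_b(M_0)=\sigma_b(A)\cup\sigma_b(B)$, pass to accumulation points, and then prove the inclusion by the contrapositive via a punctured-disc argument. The only cosmetic difference is that the paper cites \cite[Lemma~2.4]{DH} for the implication ``$A-\mu$ and $B-\mu$ Browder $\Rightarrow$ $M_C-\mu$ Browder'', whereas you extract the equivalent set inclusion $\sigma_b(M_C)\subseteq\sigma_b(A)\cup\sigma_b(B)$ from Theorem~\ref{ZZL2}; either citation does the job, and you can drop the intermediate case-analysis discussion entirely.
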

\begin{proof}
We have $\sigma_{b}(M_0)=\sigma_{b}(A)\cup  \sigma_{b}(B)$, it is clear  that $acc\sigma_{b}(M_0)=acc\sigma_{b}(A)\cup  acc\sigma_{b}(B)$.
  Now without lose generality let $0\notin acc\sigma_{b}(A)\cup acc\sigma_{b}(B)$ then, there exists $\varepsilon > 0$ such that $A-\lambda I$ and $B-\lambda I$ are Browder for every $\lambda$, $0<|\lambda|<\varepsilon$, according to \cite[Lemma 2.4]{DH}, $M_{c}-\lambda I$ is Browder for every $\lambda$, $0<|\lambda|<\varepsilon$. Thus $0\notin acc\sigma_{b}(M_{C})$.
 \end{proof}
 The  inclusion,  $ acc\sigma_{b}(M_C)\subset acc\sigma_{b}(A)\cup acc\sigma_{b}(B)$,  may be strict as we can see in the following example.

 \begin{example}\label{e1}
Let $A, B, C\in\mathcal{B}(l^2)$ defined by:
$$ Ae_n=e_{n+1}.$$
$$ B=A^*.$$
$$ C=e_0\otimes e_0.$$
 where $\{e_n\}_{n\in\mathbb{N}}$ is the orthonormal basis of $l^2$. We have $\sigma_{b}(A)=\{\lambda\in\mathbb{C}; |\lambda|\leq 1\}$, then $acc\sigma_{b}(A)=\{\lambda\in\mathbb{C}; |\lambda|\leq 1\}$. Since  $M_C$ is unitary,  then $acc\sigma_{b}(M_C)\subseteq\{\lambda\in\mathbb{C}; |\lambda|= 1\}$.  So  $0\notin acc\sigma_{b}(M_C)$, but $0\in acc\sigma_{b}(A)\cup acc\sigma_{b}(B)$. Notes that $A^*=B$ has not the SVEP.
\end{example}

\begin{definition}
Let $T\in\mathcal{ B}(X)$. We said that $T$ has the property $(aB)$ at $\lambda\in\mathbb{C}$ if $\lambda\notin acc\sigma_b(T)$.
\end{definition}

\begin{lemma}\label{p1p}
If two of $M_C$, $A$ and $B$ have the property $(aB)$ at $0$, then the third have also the property $(aB)$ at $0$.
\end{lemma}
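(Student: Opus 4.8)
The plan is to exploit the decomposition $acc\,\sigma_b(T)=acc\,\sigma_e(T)\cup\sigma_{gD}(T)$ together with the fact that, for the essential (Fredholm) part, the quantities $\alpha(M_C-\lambda)$, $\beta(M_C-\lambda)$ and the index behave additively in the standard punctured-neighbourhood sense, while for the Drazin part one uses the ascent/descent information encoded in Lemma~\ref{key} and \cite[Lemma 2.4]{DH}. Concretely, ``$T$ has property $(aB)$ at $0$'' means $0\notin acc\,\sigma_b(T)$, i.e.\ there is $\varepsilon>0$ with $T-\lambda I$ Browder for all $\lambda$ with $0<|\lambda|<\varepsilon$; equivalently $0$ is at worst an isolated point of $\sigma_b(T)$. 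So the statement to prove is the ``three-space''-type lemma: if two of the three operators are Browder on a punctured disc about $0$, so is the third.

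First I would dispose of the two easy implications. If $A$ and $B$ both have $(aB)$ at $0$, then by Lemma~\ref{L1} (whose proof already invokes \cite[Lemma 2.4]{DH}) we get $0\notin acc\,\sigma_b(M_C)$ directly, so $M_C$ has $(aB)$ at $0$. That is one of the three cases. For the remaining two cases — ``$M_C$ and $A$ have $(aB)$ at $0$ $\Rightarrow$ $B$ does'' and ``$M_C$ and $B$ have $(aB)$ at $0$ $\Rightarrow$ $A$ does'' — I would work on a common punctured disc $0<|\lambda|<\varepsilon$ on which the two given operators are Browder and argue that the third is Browder there too. The key classical facts are: (i) if $M_C-\lambda$ and $A-\lambda$ are both Fredholm then $B-\lambda$ is Fredholm (the Fredholmness of $B-\lambda$ follows since $B-\lambda$ is, modulo the invertible-upper-triangular structure, a compression, and the quotient/restriction argument with $N(A-\lambda)$ finite-dimensional closes the range and controls the defects); symmetrically for $A-\lambda$ from $M_C-\lambda$ and $B-\lambda$; and (ii) the ascent/descent transfer: on the punctured disc $A-\lambda$ invertible forces $asc(A-\lambda)=des(A-\lambda)=0$, and combined with $M_C-\lambda$ Browder one reads off via Lemma~\ref{key} (applied to the translated matrix $M_C-\lambda$, which is Drazin invertible since it is Browder) that $asc(B-\lambda)<\infty$ and $des(B-\lambda)<\infty$; together with the Fredholmness from (i) this gives $B-\lambda$ Browder.

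The cleanest route is probably to phrase everything through the Drazin/generalized-Drazin spectrum. Since a Browder operator is precisely a Fredholm Drazin-invertible operator, ``$(aB)$ at $0$'' splits as ``$0\notin acc\,\sigma_e$'' and ``$0\notin acc\,\sigma_D = \sigma_{gD}$''. For the essential part one uses that $acc\,\sigma_e$ obeys the same three-space property — on a punctured disc where two of $M_C-\lambda,A-\lambda,B-\lambda$ are Fredholm the third is Fredholm by the elementary matrix computations with $\alpha,\beta$ (here the infinite-dimensionality and the complementation hypotheses built into $\Phi_l,\Phi_r$ are not even needed because we already have two genuine Fredholm operators). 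For the generalized-Drazin part, $0\notin \sigma_{gD}(T)\iff 0\notin acc\,\sigma(T)$, and on the punctured disc where two of the three translated operators are invertible, the third is invertible by the standard $2\times2$ inversion formula $M_C-\lambda$ invertible $\iff$ $A-\lambda$ and $B-\lambda$ invertible-in-the-triangular-sense — more precisely one shows invertibility of whichever one is unknown using that the other two are invertible on that disc (for $A$: $(A-\lambda)$ is the $(1,1)$ corner and invertibility of $M_C-\lambda$ and $B-\lambda$ gives a bounded inverse of $A-\lambda$; for $B$ similarly using surjectivity/injectivity transfer). Intersecting the two punctured discs gives a single $\varepsilon$ on which the third operator is Browder, hence it has $(aB)$ at $0$.

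The main obstacle is case (i) in the essential part done carefully in the Banach-space setting: deducing Fredholmness of $B-\lambda$ from that of $M_C-\lambda$ and $A-\lambda$ (and symmetrically for $A-\lambda$) requires the small-perturbation / restriction arguments for semi-Fredholm operators — one must check both that the range is closed and that $\beta$ (resp.\ $\alpha$) is finite, handling the off-diagonal $C$ term — and one has to make sure the ascent/descent finiteness genuinely transfers, which is exactly where Lemma~\ref{key} (for the direction involving $B$ from $M_C$) and its symmetric dual for $A$ are invoked. Once those transfer statements are in hand, assembling the three cases is routine bookkeeping with a common $\varepsilon$.
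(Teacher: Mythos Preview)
Your overall strategy---work on a common punctured disc $0<|\lambda|<\varepsilon$, transfer Fredholmness, then transfer finiteness of ascent/descent---is exactly the scheme the paper uses; the paper simply outsources the two transfer steps to \cite[Corollary~5]{hl} (Fredholm part) and \cite[Lemma~2.3]{Xi} (Drazin part), whereas you try to reproduce them by hand. There is, however, a genuine gap in your execution of the ascent/descent step. You write that Lemma~\ref{key} applied to $M_C-\lambda$ yields $asc(B-\lambda)<\infty$ \emph{and} $des(B-\lambda)<\infty$; it does not. Lemma~\ref{key} only gives $des(B-\lambda)<\infty$ (and $asc(A-\lambda)<\infty$, plus the dual statements for $A^{*},B^{*}$), so from your argument you obtain $B-\lambda\in\Phi(Y)$ with $des(B-\lambda)<\infty$ but no direct control of $asc(B-\lambda)$. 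The fix is short but you must supply it: since $M_C-\lambda$ and $A-\lambda$ are Browder one has $\mathrm{ind}(M_C-\lambda)=\mathrm{ind}(A-\lambda)=0$, and index additivity for the triangular matrix forces $\mathrm{ind}(B-\lambda)=0$; then $\alpha((B-\lambda)^n)=\beta((B-\lambda)^n)$ for all $n$, and finiteness of the descent makes the right-hand side eventually constant, hence so is the left, giving $asc(B-\lambda)<\infty$. The symmetric case $(M_C,B)\Rightarrow A$ has the mirror-image gap and the mirror-image fix.

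Your ``cleanest route'' through the decomposition $acc\,\sigma_b(T)=acc\,\sigma_e(T)\cup\sigma_{gD}(T)$ should be abandoned: the implication you need, namely $0\notin acc\,\sigma_b(T)\Rightarrow 0\notin\sigma_{gD}(T)=acc\,\sigma(T)$, is false in general (take $T$ compact with $\sigma(T)=\{0\}\cup\{1/n:n\ge1\}$: then $\sigma_b(T)=\{0\}$ so $acc\,\sigma_b(T)=\emptyset$, yet $0\in acc\,\sigma(T)$). Thus on the punctured disc you only know $M_C-\lambda$ and $A-\lambda$ are Browder, not invertible, and the $2\times2$ inversion formula cannot be invoked. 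Stick with the Fredholm-plus-index-plus-descent argument above, which is what the citations in the paper's proof encapsulate.
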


\begin{proof}
\begin{enumerate}	
\item If $A$ and $B$ have the property $(aB)$ at $0$, by lemma \ref{L1} $M_C$ has the property $(aB)$ at $0$.
\item If  $M_C$ and $A$ have the property $(aB)$ at $0$, that is $0\notin acc \sigma_{b}(M_C)$ and $0\notin acc \sigma_{b}(A)$, then there exists $\varepsilon > 0$ such that $M_{C}-\lambda I$ and $A-\lambda I$ are Browder operators for every $\lambda$, $0<|\lambda|<\varepsilon$. Thus according to \cite[ Corollary 5]{hl} and \cite[Lemma 2.3]{Xi} we have $B-\lambda I$ is Browder for every $\lambda$, $0<|\lambda|<\varepsilon$, i.e $0\notin acc\sigma_{b}(B)$.
\item If $B$ and $M_C$ have the property $(aB)$ at $0$, then $A$ has the property  $(aB)$ at $0$, the proof is similar to $ii)$.
\end{enumerate}
\end{proof}
Now we are in position to prove our first main result.
\begin{theorem}\label{Th1}
	For $A\in \mathcal{B}(X)$, $B\in \mathcal{B}(Y)$ and $C\in \mathcal{B}(Y,X)$ we have
	$$acc \sigma_{b}(M_{C})\cup [S(A^{*})\cap S(B)]= acc \sigma_{b}(A)\cup acc \sigma_{b}(B).$$
\end{theorem}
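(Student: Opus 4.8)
The plan is to prove the two inclusions separately, reducing everything to the point $\lambda = 0$ by translation. For the inclusion $acc\,\sigma_b(M_C) \cup [S(A^*)\cap S(B)] \subseteq acc\,\sigma_b(A)\cup acc\,\sigma_b(B)$, the containment $acc\,\sigma_b(M_C)\subseteq acc\,\sigma_b(A)\cup acc\,\sigma_b(B)$ is exactly Lemma \ref{L1}, so it remains to show $S(A^*)\cap S(B)\subseteq acc\,\sigma_b(A)\cup acc\,\sigma_b(B)$. I would argue contrapositively: if $0\notin acc\,\sigma_b(A)\cup acc\,\sigma_b(B)$, then $0$ is at worst an isolated point of each of $\sigma_b(A)$ and $\sigma_b(B)$, hence $0\notin acc\,\sigma_D(A)$ and $0\notin acc\,\sigma_D(B)$ (using $acc\,\sigma_b = acc\,\sigma_e \cup acc\,\sigma_D$ from the preliminaries), so $A$ and $B$ are generalized Drazin invertible at $0$; in particular $A$ has SVEP at $0$ and $B$ has SVEP at $0$. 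Since SVEP at an isolated point of the spectrum always holds, and more precisely $0\notin acc\,\sigma_b(A)$ forces $A$ to have SVEP at $0$ (and likewise $A^*$ has SVEP at $0$ because $\sigma_b(A)=\sigma_b(A^*)$), we get $0\notin S(A^*)$, so $0\notin S(A^*)\cap S(B)$.

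For the reverse inclusion $acc\,\sigma_b(A)\cup acc\,\sigma_b(B)\subseteq acc\,\sigma_b(M_C)\cup [S(A^*)\cap S(B)]$, again I translate so that the point in question is $0$ and argue contrapositively: assume $0\notin acc\,\sigma_b(M_C)$ and $0\notin S(A^*)\cap S(B)$; I must show $0\notin acc\,\sigma_b(A)\cup acc\,\sigma_b(B)$. Since $0\notin acc\,\sigma_b(M_C)$, there is $\varepsilon>0$ with $M_C - \lambda I$ Browder for $0<|\lambda|<\varepsilon$. By Lemma \ref{p1p}, it suffices to show that one of $A$, $B$ has property $(aB)$ at $0$ — then the other follows automatically. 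The hypothesis $0\notin S(A^*)\cap S(B)$ splits into two cases: either $A^*$ has SVEP at $0$, or $B$ has SVEP at $0$. In the case $B$ has SVEP at $0$: for $0<|\lambda|<\varepsilon$, $M_C-\lambda I$ Browder gives (by the structure of upper-triangular matrices, e.g. \cite[Corollary 5]{hl} and \cite[Lemma 2.3]{Xi} as used in Lemma \ref{p1p}) that $A-\lambda I$ is Browder; combined with SVEP of $B$ near $0$ (which propagates to a punctured neighborhood since $S(B)$ is open and $0\notin S(B)$) and the fact that $M_C$ is invertible-modulo-Browder, I can upgrade to $B-\lambda I$ being Browder for small $\lambda\neq 0$; hence $0\notin acc\,\sigma_b(B)$. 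The symmetric case where $A^*$ has SVEP at $0$ is handled by passing to adjoints: $M_C^* = \begin{pmatrix} A^* & 0 \\ C^* & B^*\end{pmatrix}$ is a lower-triangular matrix, $\sigma_b(M_C^*)=\sigma_b(M_C)$, and the same reasoning applied to the adjoint system with $A^*$ playing the role of the "SVEP factor" yields $0\notin acc\,\sigma_b(A^*) = acc\,\sigma_b(A)$.

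The main obstacle I anticipate is the bookkeeping in the second inclusion: turning "$M_C-\lambda I$ is Browder on a punctured disc and one corner has SVEP there" into "the corresponding corner operator is Browder on a (possibly smaller) punctured disc". This requires a careful combination of (a) the fact that $M_C-\lambda I$ Fredholm plus $B-\lambda I$ upper semi-Fredholm-type information forces Fredholmness of $A-\lambda I$ (and dually), (b) the ascent/descent bookkeeping that distinguishes Browder from merely Fredholm, for which the identities $\sigma_b(T)=\sigma_{b_+}(T)\cup S(T^*) = \sigma_{b_-}(T)\cup S(T)$ from \cite[Theorem 3.52]{r.10} are the right tool — SVEP of $B$ near $0$ kills the ascent obstruction, SVEP of $A^*$ near $0$ kills the descent obstruction — and (c) the observation that $S(B)$ (resp. $S(A^*)$) being open means SVEP at $0$ automatically gives SVEP on a whole neighborhood of $0$, so one genuinely gets the punctured-disc statement and not just information at $0$ itself. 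Once these three ingredients are assembled, Lemma \ref{p1p} does the rest of the work, and the translation-invariance of all the sets involved promotes the statement at $0$ to the statement at an arbitrary $\lambda\in\mathbb{C}$.
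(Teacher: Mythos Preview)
Your overall strategy matches the paper's: the easy inclusion via Lemma~\ref{L1} together with the containment $S(A^*)\cap S(B)\subseteq acc\,\sigma_b(A)\cup acc\,\sigma_b(B)$, and the hard inclusion by contrapositive, splitting on whether $0\notin S(A^*)$ or $0\notin S(B)$ and invoking the identity $\sigma_b(T)=\sigma_{b_-}(T)\cup S(T)$ along with Lemma~\ref{p1p}. However, your case ``$B$ has SVEP at $0$'' is written in the wrong order. You assert that $M_C-\lambda I$ Browder on the punctured disc directly gives $A-\lambda I$ Browder, and only afterwards use SVEP of $B$ to upgrade $B-\lambda I$. The first implication is false in general: Example~\ref{e1} is a counterexample, since there $M_C$ is unitary (so $M_C-\lambda I$ is Browder for all small $\lambda\neq 0$), yet $A$ is the unilateral shift and $A-\lambda I$ fails to be Browder throughout the open unit disc.

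The correct flow --- which is what the paper does, and which your own ``main obstacle'' paragraph in fact outlines --- is the reverse. From $M_C-\mu I$ Browder on $0<|\mu|<\varepsilon$ one extracts only that $A-\mu I\in\Phi_+(X)$ with $asc(A-\mu I)<\infty$ and $B-\mu I\in\Phi_-(Y)$ with $des(B-\mu I)<\infty$ (via \cite[Corollary~5]{hl} and Lemma~\ref{key}); in other words $0\notin acc\,\sigma_{b_+}(A)\cup acc\,\sigma_{b_-}(B)$. Now if $0\notin S(B)$, openness of $S(B)$ gives $\mu\notin S(B)$ on a neighborhood of $0$, hence $\mu\notin\sigma_{b_-}(B)\cup S(B)=\sigma_b(B)$ for small $\mu\neq 0$, so $0\notin acc\,\sigma_b(B)$; Lemma~\ref{p1p} then delivers $0\notin acc\,\sigma_b(A)$. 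The dual case $0\notin S(A^*)$ is handled the same way using $\sigma_b(A)=\sigma_b(A^*)=\sigma_{b_-}(A^*)\cup S(A^*)$ and $\sigma_{b_-}(A^*)=\sigma_{b_+}(A)$. So the ingredients you list are exactly the right ones; only the order in which you apply them to $A$ versus $B$ needs to be swapped.
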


\begin{proof}
It follows from lemma \ref{L1} that $acc\sigma_{b}(M_C)\subseteq acc\sigma_{b}(A)\cup  acc\sigma_{b}(B)$. Also, it is known that
$	S(A^{*})\cap S(B)  \subseteq acc\sigma(A)\cup  acc\sigma(B)
	                  \subseteq acc\sigma_{b}(A)\cup  acc\sigma_{b}(B).
	$
	Hence $acc \sigma_{b}(M_{C})\cup [S(A^{*})\cap S(B)]\subseteq acc \sigma_{b}(A)\cup acc \sigma_{b}(B)$. For the contrary inclusion, it is sufficient to prove that
	$(acc\sigma_{b}(A)\cup  acc\sigma_{b}(B))\backslash acc \sigma_{b}(M_{C}) \subseteq S(A^{*})\cap S(B)$.\\
	Let $\lambda\in (acc\sigma_{b}(A)\cup  acc\sigma_{b}(B))\backslash acc \sigma_{b}(M_{C})$ we can assume without lose of generality that $\lambda= 0$. Then $0\notin acc\sigma_{b}(M_C)$, hence there exists $\varepsilon > 0$ such that $M_{C}-\mu I$ is Browder for every $0<\arrowvert \mu \arrowvert<\varepsilon$, so $A-\mu I\in \Phi_{+}(X)$, $B-\mu I\in \Phi_{-}(Y)$ for every $0<\arrowvert \mu \arrowvert<\varepsilon$. Moreover it follows from lemma \ref{key} that $des(B-\mu I)<\infty$ and $asc(A-\mu I)<\infty$ for every $0<\arrowvert \mu \arrowvert<\varepsilon$, so $0\notin acc\sigma_{b_{+}}(A)\cup  acc\sigma_{b_{-}}(B)$. For the sake of contradiction assume that $0\notin S(A^{*})\cap S(B)$.
	
	\textbf{Case 1} $0\notin S(A^{*})$ : If $0\in \sigma_{b}(A^{*})$ we have $\sigma_{b}(A^{*})=\sigma_{b_{-}}(A^{*})\cup S(A^{*})$, then $0\in \sigma_{b_{-}}(A^{*})$. Since $0\notin acc \sigma_{b_{+}}(A)= acc \sigma_{b_{-}}(A^{*})$, it follows that $0$ is an isolated point of $\sigma_{b_{-}}(A^{*})$. On the other hand $\overline{S(A^{*})}\subseteq \sigma_{b}(A^{*})=\sigma_{b_{-}}(A^{*})\cup S(A^{*})$, thus $\partial S(A^{*})\subseteq \sigma_{b_{-}}(A^{*})$. As $\sigma_{b}(A^{*})=\sigma_{b_{-}}(A^{*})\cup S(A^{*})$ and $0\in iso \sigma_{b_{-}}(A^{*})$ then $0$ is an isolated point of $\sigma_{b}(A)=\sigma_{b}(A^{*})$. Hence $0\notin acc \sigma_{b}(A)$, according to Lemma \ref{p1p} $0\notin acc \sigma_{b}(B)$ but this is impossible. Now if $0\notin \sigma_{b}(A^{*})$ then $0\notin acc \sigma_{b}(A^{*})=acc \sigma_{b}(A)$. Hence $0\notin acc\sigma_{b}(B)$, and this is contradiction.
	
	\textbf{Case 2} $0\notin S(B)$ : If $0\in \sigma_{b}(B)$ we have $0\notin acc \sigma_{b_{-}}(B)$. Indeed $\sigma_{b}(B)=\sigma_{b_{-}}(B)\cup S(B)$ then $0\in \sigma_{b_{-}}(B)$ but $0\notin acc\sigma_{b_{-}}(B)$ thus $0\in iso \sigma_{b_{-}}(B)$ it follows that $0\in iso \sigma_{b}(B)$ i.e $0\notin acc \sigma_{b}(B)$. Since $0\notin acc \sigma_{b}(M_{C})$ then $0\notin acc \sigma_{b}(A)$, contradiction. Now if $0\notin \sigma_{b}(B)$ then $0\notin acc \sigma_{b}(B)$ since $0\notin acc \sigma_{b}(M_{C})$ thus $0\notin acc \sigma_{b}(A)$, contradiction. As a result
	$$acc \sigma_{b}(M_{C})\cup [S(A^{*})\cap S(B)] = acc \sigma_{b}(A)\cup acc \sigma_{b}(B).$$
	
	\end{proof}

From theorem \ref{Th1} , we obtain immediately the following corollary.

\begin{corollary}
Let $(A, B)\in \mathcal{B}(X)\times \mathcal{B}(Y)$ in $S(A^{*})\cap S(B)=\emptyset$, then for every $C\in\mathcal{B}(Y, X)$ we have
$$acc \sigma_{b}(M_{C})=acc \sigma_{b}(A)\cup acc \sigma_{b}(B).$$
In particular if $A^{*}$ or $B$ have he SVEP, then the last equality hold.

\end{corollary}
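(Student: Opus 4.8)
The plan is to derive this statement directly from Theorem \ref{Th1}, since the Corollary is exactly the case in which the ``correction term'' $S(A^{*})\cap S(B)$ vanishes. The only real content lies in Theorem \ref{Th1}; here the argument is purely formal.

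First I would apply Theorem \ref{Th1} to the pair $(A,B)$ and the given $C$, which gives the identity
$$acc\,\sigma_{b}(M_{C})\cup\bigl[S(A^{*})\cap S(B)\bigr]=acc\,\sigma_{b}(A)\cup acc\,\sigma_{b}(B).$$
Under the hypothesis $S(A^{*})\cap S(B)=\emptyset$ the left-hand side collapses to $acc\,\sigma_{b}(M_{C})$, so we immediately obtain $acc\,\sigma_{b}(M_{C})=acc\,\sigma_{b}(A)\cup acc\,\sigma_{b}(B)$. (If one wants to keep the two inclusions visibly separate, the inclusion $\subseteq$ is Lemma \ref{L1}, and the reverse inclusion is precisely the content of Theorem \ref{Th1} once the intersection of the SVEP-failure sets is empty.)

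For the ``in particular'' assertion I would recall the standard fact that an operator $T\in\mathcal{B}(Z)$ has SVEP (everywhere) if and only if $S(T)=\emptyset$, which is built into the definition of $S(T)$ recalled in the Preliminaries. Hence, if $A^{*}$ has SVEP then $S(A^{*})=\emptyset$, and if $B$ has SVEP then $S(B)=\emptyset$; in either case $S(A^{*})\cap S(B)=\emptyset$, so the first part of the Corollary applies and the equality $acc\,\sigma_{b}(M_{C})=acc\,\sigma_{b}(A)\cup acc\,\sigma_{b}(B)$ holds for every $C\in\mathcal{B}(Y,X)$.

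There is essentially no obstacle in this proof: all the analytic work (the use of Lemma \ref{key}, the splitting $\sigma_{b}(A^{*})=\sigma_{b_{-}}(A^{*})\cup S(A^{*})$, and the boundary argument on $S(A^{*})$) has already been carried out in Theorem \ref{Th1}. The Corollary is just the specialization to $S(A^{*})\cap S(B)=\emptyset$, together with the elementary observation that SVEP of $A^{*}$ or of $B$ forces this intersection to be empty.
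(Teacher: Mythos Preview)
Your proof is correct and matches the paper's approach: the paper simply states that the corollary follows immediately from Theorem~\ref{Th1}, and your argument spells out exactly that specialization together with the definitional observation that SVEP of $A^{*}$ or $B$ forces $S(A^{*})\cap S(B)=\emptyset$.
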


\begin{example}
Let $U$ be the simple unilateral shift on $l^{2}(\mathbb{N})$, set $S=U\oplus U^{*}$ the operator defined on $l^{2}(\mathbb{N})\oplus l^{2}(\mathbb{N})$. It follows that $\sigma_{b}(S)=\sigma(S)=\{\lambda\in\mathbb{C}: |\lambda|\leq 1\}$.
Consider the operators $A$, $B$ defined by $A=S+I$ and $B=S-I$, we  have
\begin{align*}
	S(A) & =\{\lambda\in\mathbb{C}: 0\leq|\lambda-1|< 1\}\\
	S(B) & = \{\lambda\in\mathbb{C}: 0\leq |\lambda+1|< 1\}
	\end{align*}
	
So,  $S(A^{*})\cap S(B)=\emptyset$. Consequently $acc \sigma_{b}(M_{C})=acc \sigma_{b}(A)\cup acc \sigma_{b}(B)$.
\end{example}

The following lemma summarizes some well-known facts which will be used frequently.

   \begin{lemma}\cite{ZL}\label{tkn}
   	Let $M$ and $N$ be two bounded subsets of complex plan $\mathbb{C}$. Then the following statements hold:
   	\begin{enumerate}
   	\item[(i)] $acc M \cup acc N = acc (M\cup N)$;
   	\item[(ii)] $iso (M\cup N)\subseteq iso M\cup iso N$;
   	\item[(iii)]  If $M$ is closed, then $\partial (acc M)\cup iso M=\partial M$;
   	\item[(iv)] If $M$ is closed, then $iso(\partial M)=iso M$.
   	\end{enumerate}
   \end{lemma}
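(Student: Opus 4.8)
The plan is to verify each of the four statements by elementary point-set arguments, since these are standard facts about closures, accumulation points, and isolated points of bounded subsets of $\mathbb{C}$. For (i), I would prove the two inclusions separately. The inclusion $\operatorname{acc} M \cup \operatorname{acc} N \subseteq \operatorname{acc}(M\cup N)$ is immediate because every limit point of $M$ (resp.\ $N$) is a limit point of the larger set $M\cup N$. For the reverse inclusion, given $\lambda\in\operatorname{acc}(M\cup N)$, every punctured neighborhood of $\lambda$ meets $M\cup N$; pass to a sequence $\lambda_k\to\lambda$ with $\lambda_k\in M\cup N$ and $\lambda_k\neq\lambda$, and by the pigeonhole principle extract a subsequence lying entirely in $M$ or entirely in $N$, which puts $\lambda$ in $\operatorname{acc} M\cup\operatorname{acc} N$.

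For (ii), I would argue by contraposition on complements: if $\lambda\notin\operatorname{iso} M$ and $\lambda\notin\operatorname{iso} N$, I must show $\lambda\notin\operatorname{iso}(M\cup N)$. If $\lambda\notin M\cup N$ there is nothing to prove, so assume $\lambda\in M\cup N$, say $\lambda\in M$; then $\lambda\in M\setminus\operatorname{iso} M=\operatorname{acc} M$ (using that $\lambda\in M$ is either isolated in $M$ or an accumulation point of $M$), hence $\lambda\in\operatorname{acc}(M\cup N)$ by part (i), so $\lambda$ is not isolated in $M\cup N$. The case $\lambda\in N$ is symmetric.

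For (iii) and (iv), assume $M$ is closed, so $M=\overline M=\operatorname{iso} M\cup\operatorname{acc} M$ with the union disjoint, and $\operatorname{acc} M$ is closed. For (iii) I would show $\partial M=\partial(\operatorname{acc} M)\cup\operatorname{iso} M$ by double inclusion: since $\operatorname{iso} M\subseteq\partial M$ (an isolated point of a set cannot be interior) and $\partial(\operatorname{acc} M)\subseteq\operatorname{acc} M\subseteq M$ with no interior points of $\operatorname{acc} M$ being interior to $M$ once one checks $\operatorname{int}(\operatorname{acc} M)\subseteq\operatorname{int} M$, one gets ``$\supseteq$''; conversely a boundary point $\lambda$ of $M$ lies in $M$ (as $M$ is closed) and is either isolated in $M$, or lies in $\operatorname{acc} M$, in which case it cannot be interior to $\operatorname{acc} M$ because any neighborhood witnessing non-membership of points near $\lambda$ in $M$ also excludes them from $\operatorname{acc} M$. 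For (iv), $\operatorname{iso} M\subseteq\partial M$ shows each isolated point of $M$ is an isolated point of $\partial M$, and conversely an isolated point of $\partial M$ that were an accumulation point of $M$ would, by (iii), force nearby points of $\operatorname{acc} M$ hence of $\partial M$ to accumulate, contradicting isolation.

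The only mild obstacle is bookkeeping the inclusion $\operatorname{int}(\operatorname{acc} M)\subseteq\operatorname{int} M$ used in (iii) and (iv): if $\lambda$ has a neighborhood contained in $\operatorname{acc} M$, then since $\operatorname{acc} M\subseteq M$ that same neighborhood lies in $M$. With this in hand all four parts reduce to routine neighborhood chasing, so I would present them compactly.
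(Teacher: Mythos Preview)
The paper does not prove this lemma at all: it is quoted verbatim from \cite{ZL} and used as a black box, so there is no ``paper's proof'' to compare against. Your decision to supply an elementary point-set argument is therefore an addition rather than a reconstruction.

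Your arguments for (i) and (ii) are clean and correct. In (iii), however, the bookkeeping is slightly off. To show $\partial(\operatorname{acc} M)\subseteq\partial M$ you need that a point which is \emph{not} interior to $\operatorname{acc} M$ is \emph{not} interior to $M$, i.e.\ $\operatorname{int} M\subseteq\operatorname{int}(\operatorname{acc} M)$; the inclusion you actually justify, $\operatorname{int}(\operatorname{acc} M)\subseteq\operatorname{int} M$, is the trivial direction and only serves the opposite inclusion $\partial M\subseteq\partial(\operatorname{acc} M)\cup\operatorname{iso} M$. The missing direction is equally easy (any open disk contained in $M$ consists entirely of accumulation points of $M$), so in effect you need $\operatorname{int} M=\operatorname{int}(\operatorname{acc} M)$ and should state both halves.

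For (iv), your sketch is a bit telegraphic; the clean way to finish is to note that if $\lambda\in\operatorname{iso}(\partial M)$ then a small punctured disk $D$ about $\lambda$ misses $\partial M$, and since $D$ is \emph{connected} in $\mathbb{C}$ it lies either in $\operatorname{int} M$ or in $M^{c}$. The first case forces $\lambda\in\operatorname{int} M$, contradicting $\lambda\in\partial M$; the second gives $\lambda\in\operatorname{iso} M$. This connectivity of punctured disks is exactly what makes (iv) true in $\mathbb{C}$ but false in $\mathbb{R}$ (take $M=[0,1]$), so it is worth making explicit.
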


 The following two lemmas are the key of our second main result.
 \begin{lemma}\label{l3}
 	Let $E$, $F$ be two compact subsets of $\mathbb{C}$, such that $E\subseteq F$ and $\partial F\subseteq E$, then
 	$$\partial acc F\subseteq acc E.$$
 \end{lemma}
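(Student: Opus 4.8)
The plan is to exploit the topological decomposition of the boundary of a closed set afforded by Lemma \ref{tkn}(iii). Since $F$ is compact, hence closed, we may write $\partial F = \partial(\operatorname{acc} F)\cup \operatorname{iso} F$. The hypothesis $\partial F\subseteq E$ then splits into the two pieces $\partial(\operatorname{acc} F)\subseteq E$ and $\operatorname{iso} F\subseteq E$. The first of these is almost the conclusion — what remains is to upgrade ``$\subseteq E$'' to ``$\subseteq \operatorname{acc} E$'' on the set $\partial(\operatorname{acc} F)$.

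First I would note that $\operatorname{acc} F$ is itself closed, so applying Lemma \ref{tkn}(iv) to $\operatorname{acc} F$ gives $\operatorname{iso}(\partial(\operatorname{acc} F)) = \operatorname{iso}(\operatorname{acc} F)$. But $\operatorname{acc} F$ has no isolated points (the accumulation set of any set is closed and every point of it is an accumulation point of $F$, hence of $\operatorname{acc} F$ itself by a standard argument), so $\operatorname{iso}(\operatorname{acc} F)=\emptyset$ and therefore $\partial(\operatorname{acc} F)$ has no isolated points; that is, $\partial(\operatorname{acc} F)\subseteq \operatorname{acc}(\partial(\operatorname{acc} F))$. Now take any $\lambda\in\partial(\operatorname{acc} F)$. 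Every neighborhood $U$ of $\lambda$ meets $\partial(\operatorname{acc} F)\setminus\{\lambda\}$ in infinitely many points, and since $\partial(\operatorname{acc} F)\subseteq\partial F\subseteq E$, each such point lies in $E$. Hence $U$ contains infinitely many points of $E$, so $\lambda\in\operatorname{acc} E$. This proves $\partial(\operatorname{acc} F)\subseteq\operatorname{acc} E$, which is exactly the claim.

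The one point that needs care — and is the main (small) obstacle — is the assertion that $\operatorname{acc} F$ has no isolated points, equivalently that $\operatorname{acc}(\operatorname{acc} F)=\operatorname{acc} F$. This is where I would be most careful: if $\lambda\in\operatorname{acc} F$, then for any neighborhood $U$ of $\lambda$ the set $U\cap F$ is infinite, and one shows that such a set must contain points of $\operatorname{acc} F$ other than $\lambda$ (otherwise an infinite subset of $U\cap F$ would be relatively discrete, yet it would have to accumulate somewhere in the compact set $\bar U$, forcing an accumulation point of $F$ inside $U$ distinct from $\lambda$ once $U$ is taken small). This makes $\lambda$ an accumulation point of $\operatorname{acc} F$. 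Note the hypotheses $E\subseteq F$ and $\partial F\subseteq E$ are used only through $\partial(\operatorname{acc} F)\subseteq\partial F\subseteq E$; the inclusion $E\subseteq F$ is not actually needed for this lemma, but keeping it does no harm and matches the intended application where $E$ and $F$ are the sets $\operatorname{acc}\sigma_b(M_C)$ and $\operatorname{acc}\sigma_b(A)\cup\operatorname{acc}\sigma_b(B)$.
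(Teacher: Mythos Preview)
Your argument contains a genuine gap: the claim that $\operatorname{acc} F$ has no isolated points is false in general. Take $F=\{0\}\cup\{1/n:n\geq 1\}$, which is compact; then $\operatorname{acc} F=\{0\}$, and $0$ is isolated in $\operatorname{acc} F$. Your attempted justification breaks down precisely here: you argue that an infinite subset of $U\cap F$ must accumulate somewhere in $\overline{U}$, producing a point of $\operatorname{acc} F$ in $U$ ``distinct from $\lambda$''. But nothing prevents that accumulation point from being $\lambda$ itself, as the example shows. Shrinking $U$ does not help. Consequently the reduction to the single case $\lambda\in\operatorname{acc}(\partial(\operatorname{acc} F))$ is not legitimate, and the proof as written does not cover the case $\lambda\in\operatorname{iso}(\operatorname{acc} F)$.

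The paper's proof treats exactly this second case separately. If $\lambda\in\operatorname{iso}(\partial(\operatorname{acc} F))=\operatorname{iso}(\operatorname{acc} F)$ (via Lemma~\ref{tkn}(iv)), choose $\varepsilon>0$ so that the punctured disc $0<|\tau-\lambda|<\varepsilon$ misses $\operatorname{acc} F$. Since $\lambda\in\operatorname{acc} F$, there is a sequence $\mu_n\in F\setminus\{\lambda\}$ with $\mu_n\to\lambda$; for $n$ large these $\mu_n$ lie in the punctured disc, hence $\mu_n\notin\operatorname{acc} F$, i.e.\ $\mu_n\in\operatorname{iso} F\subseteq\partial F\subseteq E$. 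Thus $\lambda\in\operatorname{acc} E$. Your Case~1 argument (when $\lambda\in\operatorname{acc}(\partial(\operatorname{acc} F))$) is fine and coincides with the paper's; what is missing is precisely this second case, which uses the inclusion $\operatorname{iso} F\subseteq E$ that you noted but never exploited.
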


 \begin{proof}
 	For any $\lambda \in \partial (accF)$ then either $\lambda \in acc(\partial acc F)$ or $\lambda \in iso(\partial acc F)$.\\
 	\textbf{Case 1:} If $\lambda \in acc(\partial acc F)$, then  there exist $\lambda_{n}\in \partial acc F$ $n=1,2,...$ such that $\displaystyle\lim_{n\rightarrow \infty} \lambda_{n}=\lambda $, since
 	$$\partial acc F\subseteq \partial F \subseteq E$$
 	it follows that $\lambda_{n}\in E$, $n=1,2,...$,  thus we have $\lambda\in acc E$.\\
 	\textbf{Case 2:} If $\lambda \in iso(\partial acc F)$, then we get $\lambda \in iso(acc F)$ from Lemma \ref{tkn}, that is
 	$$iso(\partial acc F) = iso(acc F).$$
 	Thus there exists $\varepsilon > 0$ such that $\tau \notin acc F$ for every $\tau$, $0<|\lambda-\tau|<\varepsilon$, but $\lambda \in acc F$ then there exist $\mu_{n}\in \partial F$ $n=1,2,...$ such that $\displaystyle\lim_{n\rightarrow \infty} \mu_{n}=\lambda $ and $\mu_{n}\neq \lambda$ for every $n=1,2,...$
 	i.e there exists $N\in\mathbb{N}^{*}$ such that $0<|\mu_{n}-\lambda|<\varepsilon$ for every $n\geq N$.\\
 	Now let $\lambda_{n}=\mu_{N+1+n}$, then $\lambda_{n}\in iso F$ $n=1,2,...$ and $\displaystyle\lim_{n\rightarrow \infty} \lambda_{n}=\lambda $. It follows from Lemma \ref{tkn} that $$iso F\subseteq \partial F \subseteq E$$
 	so $\lambda_{n}\in iso F \subseteq E$ for every $n=1,2,...$ and $lim_{n\rightarrow \infty} \lambda_{n}=\lambda $, therefore $\lambda\in acc E$.
 	Then for both cases $\partial acc F\subseteq acc E$ is true.
 \end{proof}

 \begin{lemma}\label{l4}
 	Let $E$, $F$ be two compact subsets of $\mathbb{C}$ such that $E\subseteq F$ and $\eta(E)=\eta(F)$ then $\eta(acc E)=\eta(acc F)$.
 \end{lemma}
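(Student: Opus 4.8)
The claim is that if $E\subseteq F$ are compact with $\eta(E)=\eta(F)$, then $\eta(\operatorname{acc} E)=\eta(\operatorname{acc} F)$. Since $E\subseteq F$ gives $\operatorname{acc} E\subseteq\operatorname{acc} F$ and hence $\eta(\operatorname{acc} E)\subseteq\eta(\operatorname{acc} F)$ automatically, the whole content is the reverse inclusion $\eta(\operatorname{acc} F)\subseteq\eta(\operatorname{acc} E)$. The first thing I would do is recall the standard characterisation of the polynomially convex hull: for a compact set $K$, $\eta(K)$ is obtained from $K$ by filling in the bounded components of $\mathbb{C}\setminus K$, equivalently $\partial\,\eta(K)\subseteq\partial K\subseteq K$ and $\mathbb{C}\setminus\eta(K)$ is the unbounded component of $\mathbb{C}\setminus K$. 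From this it is a general fact that $\eta(K)=\eta(L)$ whenever $L$ is a compact set with $\partial\,\eta(K)\subseteq L\subseteq\eta(K)$; so to prove $\eta(\operatorname{acc} F)\subseteq\eta(\operatorname{acc} E)$ it suffices to exhibit $\operatorname{acc} E$ sandwiched appropriately, namely to show $\partial\,\eta(\operatorname{acc} F)\subseteq\operatorname{acc} E\subseteq\eta(\operatorname{acc} F)$.

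The right-hand inclusion $\operatorname{acc} E\subseteq\eta(\operatorname{acc} F)$ is immediate from $\operatorname{acc} E\subseteq\operatorname{acc} F\subseteq\eta(\operatorname{acc} F)$. For the left-hand inclusion I would first pass from hulls to boundaries: since $\eta(\operatorname{acc} F)$ is polynomially convex, $\partial\,\eta(\operatorname{acc} F)\subseteq\partial(\operatorname{acc} F)$, so it is enough to show $\partial(\operatorname{acc} F)\subseteq\operatorname{acc} E$. This is exactly the shape of Lemma \ref{l3}, which gives $\partial(\operatorname{acc} F)\subseteq\operatorname{acc} E$ under the hypotheses $E\subseteq F$ and $\partial F\subseteq E$. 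So the crux is to verify that the assumption $\eta(E)=\eta(F)$ forces $\partial F\subseteq E$. Here I would argue: $\partial F\subseteq\partial\,\eta(F)$ is false in general, so instead use $\partial\,\eta(F)\subseteq\partial F$; combined with $\eta(E)=\eta(F)$ we get $\partial\,\eta(E)=\partial\,\eta(F)\subseteq\partial F\subseteq F$. That still is not quite $\partial F\subseteq E$. The honest route is: a point $\lambda\in\partial F$ lies in $\eta(F)=\eta(E)$; if $\lambda\notin E$ then $\lambda$ lies in a bounded component $U$ of $\mathbb{C}\setminus E$, and since $\eta(E)\supseteq U$ while $F\subseteq\eta(E)$, one shows using $\lambda\in\partial F$ that $U$ must meet $\mathbb{C}\setminus F$, contradicting $U\subseteq\eta(E)$ and the fact (from $\eta(E)=\eta(F)$) that the unbounded components of the complements of $E$ and $F$ coincide. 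So $\partial F\subseteq E$ holds, and Lemma \ref{l3} applies.

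Putting the pieces together: $\eta(E)=\eta(F)$ $\Rightarrow$ $\partial F\subseteq E$; with $E\subseteq F$, Lemma \ref{l3} yields $\partial(\operatorname{acc} F)\subseteq\operatorname{acc} E$; since $\operatorname{acc} E\subseteq\operatorname{acc} F$ we also have $\operatorname{acc} E\subseteq\eta(\operatorname{acc} F)$; therefore $\operatorname{acc} E$ is a compact set with $\partial\,\eta(\operatorname{acc} F)\subseteq\partial(\operatorname{acc} F)\subseteq\operatorname{acc} E\subseteq\eta(\operatorname{acc} F)$, and the general sandwiching fact for polynomial hulls gives $\eta(\operatorname{acc} E)=\eta(\operatorname{acc} F)$. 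The main obstacle I expect is precisely the topological bookkeeping in the step $\eta(E)=\eta(F)\Rightarrow\partial F\subseteq E$: one must be careful that ``filling holes'' behaves well, i.e. that a boundary point of $F$ cannot sit inside a hole of $E$ — this is intuitively clear but needs the connectedness argument about components of $\mathbb{C}\setminus E$ and $\mathbb{C}\setminus F$ spelled out. Everything after that is a clean application of Lemma \ref{l3} and the elementary properties of $\eta(\cdot)$.
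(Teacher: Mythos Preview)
Your overall architecture is exactly the paper's: reduce $\eta(\operatorname{acc}E)=\eta(\operatorname{acc}F)$ to the boundary inclusion $\partial(\operatorname{acc}F)\subseteq\operatorname{acc}E$ and then invoke Lemma~\ref{l3}. The paper does precisely this (modulo an evident typo: its ``$\partial(\operatorname{acc}E)\subseteq\operatorname{acc}F$'' should read ``$\partial(\operatorname{acc}F)\subseteq\operatorname{acc}E$''). So the strategy is fine.

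The gap is in your ``crux'' step: the implication $\eta(E)=\eta(F)\Rightarrow\partial F\subseteq E$ is \emph{false}. Take $E=\{\,|z|=1\,\}$ and $F=\{\,|z|=1\,\}\cup\{0\}$. Then $E\subseteq F$, both have polynomial hull the closed unit disc, but $0\in\partial F\setminus E$. In your connectedness argument you pick $\lambda\in\partial F\setminus E$, put it in a bounded component $U$ of $\mathbb{C}\setminus E$, observe that $U$ meets $\mathbb{C}\setminus F$, and claim a contradiction with the fact that the unbounded components of $\mathbb{C}\setminus E$ and $\mathbb{C}\setminus F$ coincide. But there is no contradiction: $U$ can meet a \emph{bounded} component of $\mathbb{C}\setminus F$ (in the example, $U=\{|z|<1\}$ meets $\{0<|z|<1\}$), and nothing forces that intersection into the unbounded component. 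So the hypothesis of Lemma~\ref{l3} is simply not available from $\eta(E)=\eta(F)$ alone.

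Two remarks. First, the paper's own proof is equally terse on this point: it cites Lemma~\ref{l3} without checking $\partial F\subseteq E$, so the same issue is present there. Second, in the only place Lemma~\ref{l4} is used (Theorem~\ref{th2}), the stronger hypothesis $\partial F\subseteq E$ \emph{is} available independently (from the Browder filling-in-holes result), so the application goes through. If you want a self-contained proof of Lemma~\ref{l4} as stated, you cannot route it through ``$\partial F\subseteq E$''; you would need to argue directly that $\partial(\operatorname{acc}F)\subseteq\operatorname{acc}E$ from $E\subseteq F$ and $\eta(E)=\eta(F)$, handling separately the isolated points of $F$ lying in holes of $E$ (which, being isolated, do not contribute to $\operatorname{acc}F$).
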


 \begin{proof}
 	Since $E\subseteq F$ then $acc E\subseteq acc F$, we need to show that $\partial (acc F)\subseteq \partial (acc E)$ from the maximum module theorem. But since $int (acc E)\subseteq int (acc F)$ it suffices to show that $\partial (acc E)\subseteq (acc F)$ which is always verified by lemma \ref{l3}.
 \end{proof}
 The following theorem says that the passage from $acc \sigma_{b}(A) \cup acc \sigma_{b}(B)$ to $acc \sigma_{b}(M_{C})$ is the punching of some set in $acc \sigma_{b}(A) \cap acc \sigma_{b}(B)$.
 \begin{theorem}\label{th2}
 	Let $A\in \mathcal{B}(X)$, $B\in \mathcal{B}(Y)$ and $C\in \mathcal{B}(Y,X)$. Then
 	$$acc \sigma_{b}(M_{C})\cup W_{acc\sigma_{b}}= acc \sigma_{b}(A) \cup acc \sigma_{b}(B)$$
 	where $W_{acc\sigma_{b}}$ is the union of certain holes in $acc \sigma_{b}(M_{C})$, which happen to be subsets of $acc \sigma_{b}(A) \cap acc \sigma_{b}(B)$.
 \end{theorem}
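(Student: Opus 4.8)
The plan is to mimic the classical "filling in holes" argument (as in Theorem~\ref{ZZL2} for $\sigma_b$), but carried out at the level of the accumulation sets, using Lemmas~\ref{l3} and~\ref{l4} as the topological engine. Set $F := acc\,\sigma_b(A)\cup acc\,\sigma_b(B) = acc\,\sigma_b(M_0)$ and $E := acc\,\sigma_b(M_C)$. By Lemma~\ref{L1} we already have $E\subseteq F$, and both are compact. The claim then amounts to showing that $F\setminus E$ consists of certain bounded components of the complement $\mathbb{C}\setminus E$ (i.e.\ holes of $E$), each contained in $acc\,\sigma_b(A)\cap acc\,\sigma_b(B)$.

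First I would establish that $\eta(E)=\eta(F)$, i.e.\ $E$ and $F$ have the same polynomially convex hull. This is where Lemma~\ref{l4} enters: it suffices to check that $\eta\bigl(\sigma_b(M_C)\bigr)=\eta\bigl(\sigma_b(A)\cup\sigma_b(B)\bigr)$, which is essentially the content of Theorem~\ref{ZZL2} (the punctured set $W_b$ consists of holes, hence adding it back does not change the polynomially convex hull), and then apply Lemma~\ref{l4} with $E\leadsto\sigma_b(M_C)$, $F\leadsto\sigma_b(A)\cup\sigma_b(B)$ to conclude $\eta(acc\,\sigma_b(M_C))=\eta(acc\,\sigma_b(A)\cup acc\,\sigma_b(B))$, that is $\eta(E)=\eta(F)$. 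Having $\eta(E)=\eta(F)$ with $E\subseteq F$ forces $F\setminus E$ to be a union of holes of $E$: indeed $\partial F\subseteq\eta(F)=\eta(E)$, and a standard argument shows $\partial F\subseteq E$ (using Lemma~\ref{l3} type reasoning together with $\partial\,acc\,\sigma_b(\cdot)$ control), so every point of $F\setminus E$ lies in a bounded component of $\mathbb{C}\setminus E$; since such a component cannot meet $\partial F$ it lies entirely in $F$, hence is a hole of $E$ swallowed by $F$. Define $W_{acc\sigma_b}$ to be the union of exactly those holes of $E$ that are contained in $F$; then $E\cup W_{acc\sigma_b}=F$ by construction.

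The remaining point is the localization $W_{acc\sigma_b}\subseteq acc\,\sigma_b(A)\cap acc\,\sigma_b(B)$. Let $\Omega$ be one of these holes, so $\Omega$ is open, connected, $\Omega\subseteq F$, and $\Omega\cap E=\emptyset$. By Theorem~\ref{Th1}, on $\Omega$ we have $\Omega\subseteq F\setminus E\subseteq S(A^*)\cap S(B)\subseteq acc\,\sigma(A)\cap acc\,\sigma(B)\subseteq acc\,\sigma_b(A)\cap acc\,\sigma_b(B)$; wait --- more carefully, Theorem~\ref{Th1} only gives $F\setminus E\subseteq S(A^*)\cap S(B)$, and one must then upgrade this to $acc\,\sigma_b(A)\cap acc\,\sigma_b(B)$. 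Since $S(A^*)\subseteq\sigma_{b_-}(A^*)\subseteq\sigma_b(A^*)=\sigma_b(A)$ (and $S(A^*)$ is open, hence contained in $acc$ of any closed superset that contains it), we get $S(A^*)\subseteq acc\,\sigma_b(A)$ and similarly $S(B)\subseteq acc\,\sigma_b(B)$, so $F\setminus E\subseteq acc\,\sigma_b(A)\cap acc\,\sigma_b(B)$, and in particular $W_{acc\sigma_b}\subseteq acc\,\sigma_b(A)\cap acc\,\sigma_b(B)$. This finishes the proof.

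The main obstacle I anticipate is the clean passage from "$E,F$ compact with $\eta(E)=\eta(F)$ and $E\subseteq F$" to "$F\setminus E$ is a union of holes of $E$." The subtle part is verifying $\partial F\subseteq E$ rather than merely $\partial F\subseteq\eta(E)$; this needs Lemma~\ref{l3} applied with $E\leadsto acc\,\sigma_b(M_C)$ and $F\leadsto\sigma_b(A)\cup\sigma_b(B)$ or a variant thereof, exploiting that $\partial\bigl(\sigma_b(A)\cup\sigma_b(B)\bigr)\subseteq\sigma_b(M_C)$ (boundary points of the union spectrum are never punched out when filling holes), and then pushing this boundary inclusion down to the accumulation sets. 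One must be careful that "hole of $E$" is taken relative to $\mathbb{C}\setminus E$, and that a hole meeting $F$ is entirely contained in $F$ because its boundary lies in $\partial E\subseteq E\subseteq F$ and $F$ is closed, so connectedness of the hole together with $\Omega\cap\partial F=\emptyset$ (as $\partial F\subseteq E$ and $\Omega\cap E=\emptyset$) gives $\Omega\subseteq\mathrm{int}\,F\subseteq F$.
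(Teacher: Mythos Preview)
Your proposal is correct and follows the same two-step architecture as the paper: (i) use Lemma~\ref{l4} (fed by Theorem~\ref{ZZL2}, via $\sigma_b(M_C)\subseteq\sigma_b(A)\cup\sigma_b(B)$ and $\partial(\sigma_b(A)\cup\sigma_b(B))\subseteq\sigma_b(M_C)$) to obtain $\eta(acc\,\sigma_b(M_C))=\eta(acc\,\sigma_b(A)\cup acc\,\sigma_b(B))$, so that the passage from $E$ to $F$ is ``filling holes''; (ii) localize the difference $F\setminus E$ inside $acc\,\sigma_b(A)\cap acc\,\sigma_b(B)$.

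The only genuine divergence is in step (ii). The paper does it directly from Lemma~\ref{p1p}: if $\lambda\in acc\,\sigma_b(A)\setminus acc\,\sigma_b(B)$ (or symmetrically), then $B$ has property $(aB)$ at $\lambda$, and if also $M_C$ had it, Lemma~\ref{p1p} would force $A$ to have it, a contradiction; hence $\lambda\in acc\,\sigma_b(M_C)$. You instead route through Theorem~\ref{Th1} to get $F\setminus E\subseteq S(A^*)\cap S(B)$, and then observe that $S(A^*)$, being open and contained in $\sigma_b(A^*)=\sigma_b(A)$, lies in $acc\,\sigma_b(A)$ (and similarly for $B$). Both arguments are valid; the paper's is more self-contained since Theorem~\ref{Th1} itself already rests on Lemma~\ref{p1p}, so your detour is logically heavier than needed. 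Your extended discussion of the topological step (why $\eta(E)=\eta(F)$ with $E\subseteq F$ makes $F\setminus E$ a union of holes, via $\partial F\subseteq E$) is more explicit than the paper's, which simply asserts this passage after recording equation~\eqref{eq2}.
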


 \begin{proof}
 	We first claim that, for every $C\in \mathcal{B}(Y,X)$ we have
 	\begin{equation}\label{eq1}
 	(acc \sigma_{b}(A) \cup acc \sigma_{b}(B))\backslash (acc \sigma_{b}(A) \cap acc \sigma_{b}(B))\subseteq acc \sigma_{b}(M_{C}),
 	\end{equation}
 	 to see this suppose that $\lambda \in (acc \sigma_{b}(A) \cup acc \sigma_{b}(B))\backslash (acc \sigma_{b}(A) \cap acc \sigma_{b}(B))$ then either $\lambda\in acc \sigma_{b}(A) \backslash acc \sigma_{b}(B)$ or $\lambda\in acc \sigma_{b}(B) \backslash acc \sigma_{b}(A)$.
 	 \begin{enumerate}
 	\item If $\lambda\in acc \sigma_{b}(A) \backslash acc \sigma_{b}(B)$ it follows that $A$ has not the property $(aB)$ at $\lambda$ and $B$ has the property $(aB)$ at $\lambda$, thus $\lambda \in acc \sigma_{b}(M_{C})$, for if it were not so, then $\lambda \notin acc \sigma_{b}(M_{C})$ and hence according to lemma\ref{p1p}
 	$\lambda \notin acc \sigma_{b}(A)$ which is impossible.
 	\item If $\lambda\in acc \sigma_{b}(B) \backslash acc \sigma_{b}(A)$, similarly as in (1) we can  show that $\lambda \in acc \sigma_{b}(M_{C})$.
 \end{enumerate}
 	Moreover, from the proof of Theorem \ref{ZZL2}) we have
 	$$\partial(\sigma_{b}(A) \cap \sigma_{b}(B))\subseteq \sigma_{b}(M_{C}).$$ Applying lemma \ref{l3} and lemma \ref{l4} we have
 	\begin{equation}\label{eq2}
 	\eta (acc \sigma_{b}(M_{C}))=\eta (acc \sigma_{b}(A) \cup acc \sigma_{b}(B)).
 	\end{equation}
 	Therefore \ref{eq2} says that the passage from $acc \sigma_{b}(M_{C})$ to $acc \sigma_{b}(A) \cup acc \sigma_{b}(B)$ is the filling in certain of the holes in $acc \sigma_{b}(M_{C})$. But $(acc \sigma_{b}(A) \cup acc \sigma_{b}(B))\backslash acc \sigma_{b}(M_{C})$ is contained in $acc \sigma_{b}(A) \cap acc \sigma_{b}(B)$ by \ref{eq1}, it follows that the filling in certain of the holes in $ acc \sigma_{b}(M_{C})$ should occur in $acc \sigma_{b}(A) \cap acc \sigma_{b}(B)$.
 \end{proof}

\begin{corollary}
Let $(A, B)\in \mathcal{B}(X)\times \mathcal{B}(Y)$. If $ acc\sigma_{b}(A)\cap acc \sigma_{b}(B)$ has no interior points, then  for every  $C\in\mathcal{B}(Y, X)$ we have $$acc\sigma_{b}(M_C)=acc\sigma_{b}(A)\cup acc\sigma_{b}(B).$$
\end{corollary}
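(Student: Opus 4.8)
The plan is to deduce this corollary directly from Theorem \ref{th2}, which already delivers the decomposition $acc\sigma_b(M_C)\cup W_{acc\sigma_b}=acc\sigma_b(A)\cup acc\sigma_b(B)$ with the extra set $W_{acc\sigma_b}$ contained in $acc\sigma_b(A)\cap acc\sigma_b(B)$ and consisting of a union of holes of $acc\sigma_b(M_C)$. The only thing to observe is that under the hypothesis $\mathrm{int}\bigl(acc\sigma_b(A)\cap acc\sigma_b(B)\bigr)=\emptyset$ the set $W_{acc\sigma_b}$ must be empty.

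First I would recall that a hole of a compact set $K\subseteq\mathbb{C}$ is, by definition, a bounded connected component of the complement $\mathbb{C}\setminus K$; in particular every hole is a nonempty open set. Hence any nonempty union of holes of $acc\sigma_b(M_C)$ has nonempty interior. Since $W_{acc\sigma_b}\subseteq acc\sigma_b(A)\cap acc\sigma_b(B)$, if $W_{acc\sigma_b}$ were nonempty it would force $acc\sigma_b(A)\cap acc\sigma_b(B)$ to have nonempty interior, contradicting the assumption. Therefore $W_{acc\sigma_b}=\emptyset$, and Theorem \ref{th2} immediately yields $acc\sigma_b(M_C)=acc\sigma_b(A)\cup acc\sigma_b(B)$ for every $C\in\mathcal{B}(Y,X)$.

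There is essentially no obstacle here; the statement is a formal consequence of Theorem \ref{th2} together with the trivial topological remark that a nonempty hole is an open set. The only point deserving a line of care is making explicit that "union of certain holes" being a subset of an interior-free set forces it to be empty — which is exactly the argument above. I would write the proof in two or three sentences, citing Theorem \ref{th2} and noting that the holes comprising $W_{acc\sigma_b}$ are open subsets of $acc\sigma_b(A)\cap acc\sigma_b(B)$, hence all empty when that intersection has empty interior.
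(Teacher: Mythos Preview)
Your argument is correct and is exactly the intended one: the paper states this corollary immediately after Theorem~\ref{th2} without proof, since it follows directly from the fact that the holes forming $W_{acc\sigma_b}$ are open and lie inside $acc\sigma_b(A)\cap acc\sigma_b(B)$. Your two-line justification that a nonempty union of holes would give interior points of the intersection is precisely the missing sentence.
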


 From theorem \ref{ZZL2} and theorem \ref{th2}, we have the following.

\begin{theorem}\label{tthh2}
	Let $(A, B)\in \mathcal{B}(X)\times \mathcal{B}(Y)$  and  $C\in\mathcal{B}(Y, X)$. Then the following assertions are equivalent
	\begin{enumerate}
		\item[(i)] $\sigma_{b}(M_C)=\sigma_{b}(A)\cup\sigma_{b}(B)$,
		\item[(ii)] $acc\sigma_{b}(M_C)=acc\sigma_{b}(A)\cup acc\sigma_{b}(B).$
	\end{enumerate}
	
\end{theorem}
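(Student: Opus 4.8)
The plan is to prove the two implications separately, both reducing to the ``filled‑in holes'' descriptions furnished by Theorems \ref{ZZL2} and \ref{th2}.

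For (i) $\Rightarrow$ (ii) I would simply apply the accumulation–point operator to both sides of the equality $\sigma_{b}(M_C)=\sigma_{b}(A)\cup\sigma_{b}(B)$ and invoke Lemma \ref{tkn}(i), which gives $acc(\sigma_{b}(A)\cup\sigma_{b}(B))=acc\,\sigma_{b}(A)\cup acc\,\sigma_{b}(B)$; this is immediate since all the spectra involved are compact.

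For the converse (ii) $\Rightarrow$ (i) I would argue by contraposition. Assume $\sigma_{b}(M_C)\neq\sigma_{b}(A)\cup\sigma_{b}(B)$. By Lemma \ref{L1} the inclusion $\sigma_{b}(M_C)\subseteq\sigma_{b}(A)\cup\sigma_{b}(B)$ always holds, so $W_b:=(\sigma_{b}(A)\cup\sigma_{b}(B))\setminus\sigma_{b}(M_C)$ is nonempty. By Theorem \ref{ZZL2}, $W_b$ is a union of holes of $\sigma_{b}(M_C)$ and is contained in $\sigma_{b}(A)\cap\sigma_{b}(B)$; being a union of holes, $W_b$ is open, hence it contains an open disk $D$. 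Then $D\subseteq\mathrm{int}\,\sigma_{b}(A)\cap\mathrm{int}\,\sigma_{b}(B)\subseteq acc\,\sigma_{b}(A)\cap acc\,\sigma_{b}(B)$, using the elementary fact that every point of the interior of a set is an accumulation point of it, while $D\cap\sigma_{b}(M_C)=\emptyset$ forces $D\cap acc\,\sigma_{b}(M_C)=\emptyset$. Thus $D$ is a nonempty subset of $(acc\,\sigma_{b}(A)\cup acc\,\sigma_{b}(B))\setminus acc\,\sigma_{b}(M_C)$, so (ii) fails.

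Equivalently, one may phrase the whole argument through Theorem \ref{th2}: statement (i) amounts to $W_b=\emptyset$ and statement (ii) amounts to $W_{acc\sigma_{b}}=\emptyset$, the trivial direction $W_b=\emptyset\Rightarrow W_{acc\sigma_{b}}=\emptyset$ following from $W_{acc\sigma_{b}}\subseteq(acc\,\sigma_{b}(A)\cup acc\,\sigma_{b}(B))\setminus acc\,\sigma_{b}(M_C)$, and the argument above supplying $W_b\neq\emptyset\Rightarrow W_{acc\sigma_{b}}\neq\emptyset$. The only genuinely substantive point—although it is short—is the passage from ``$W_b$ is a union of holes of $\sigma_b(M_C)$'' to ``$W_b$ has nonempty interior contained in $\sigma_{b}(A)\cap\sigma_{b}(B)$'', combined with $\mathrm{int}\,K\subseteq acc\,K$; everything else is bookkeeping with Lemmas \ref{L1} and \ref{tkn}.
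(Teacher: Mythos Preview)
Your proof is correct. The direction (i)$\Rightarrow$(ii) is handled exactly as in the paper: apply $acc$ to both sides and use Lemma~\ref{tkn}(i). For (ii)$\Rightarrow$(i), both you and the paper end up showing the inclusion $W_b\subseteq W_{acc\,\sigma_b}$ (equivalently, $W_b\neq\emptyset\Rightarrow W_{acc\,\sigma_b}\neq\emptyset$), but the mechanisms differ. The paper argues pointwise: given $\lambda\in W_b$, it rules out $\lambda\in iso(\sigma_b(A)\cup\sigma_b(B))$ by the chain
\[
iso(\sigma_b(A)\cup\sigma_b(B))\subseteq\partial\sigma_b(A)\cup\partial\sigma_b(B)\subseteq\sigma_{lb}(A)\cup\sigma_{rb}(B)\subseteq\sigma_b(M_C),
\]
which invokes Theorem~\ref{ZHL}. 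You instead exploit the topology of the ``holes'' description in Theorem~\ref{ZZL2} directly: since holes are bounded components of $\mathbb{C}\setminus\sigma_b(M_C)$, the set $W_b$ is open and sits inside $\sigma_b(A)\cap\sigma_b(B)$, hence inside $\mathrm{int}\,\sigma_b(A)\cap\mathrm{int}\,\sigma_b(B)\subseteq acc\,\sigma_b(A)\cap acc\,\sigma_b(B)$, while remaining disjoint from $acc\,\sigma_b(M_C)$. Your route is a bit more economical, as it avoids Theorem~\ref{ZHL} altogether; the paper's route has the minor advantage of yielding the pointwise inclusion $W_b\subseteq W_{acc\,\sigma_b}$ explicitly, which it reuses later.

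One small citation slip: the inclusion $\sigma_b(M_C)\subseteq\sigma_b(A)\cup\sigma_b(B)$ is not the content of Lemma~\ref{L1} (which is about accumulation points); it is the underlying fact from \cite[Lemma~2.4]{DH} used in that lemma's proof, or it can be read off from Theorem~\ref{ZZL2} itself. This does not affect the validity of your argument.
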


\begin{proof}
	First we show that \begin{equation}\label{inc}
	W_{b}\subseteq W_{acc \sigma_{b}}.
	\end{equation}
	Indeed, if $\lambda\in W_{b}$ then according to Theorem \ref{ZZL2} we have $\lambda\in (\sigma_{b}(A)\cup \sigma_{b}(B))\backslash \sigma_{b}(M_{c})$, then $\lambda \notin \sigma_{b}(M_{c})$ consequently $\lambda \notin acc \sigma_{b}(M_{c})$, it is enough to show that $\lambda\in acc(\sigma_{b}(A)\cup \sigma_{b}(B))$, if it was not then $\lambda\notin acc(\sigma_{b}(A)\cup \sigma_{b}(B))$ but $\lambda\in \sigma_{b}(A)\cup\sigma_{b}(B)$ thus
	\begin{align*}
	\lambda\in iso(\sigma_{b}(A)\cup\sigma_{b}(B)) & \subseteq iso(\sigma_{b}(A))\cup iso(\sigma_{b}(B))\>\>\>\> \mbox{(Lemma \ref{tkn} )} \\
	& \subseteq \partial \sigma_{b}(A)\cup \partial \sigma_{b}(B) \>\>\>\> \mbox{(Lemma \ref{tkn} )}\\
	& \subseteq \sigma_{lb}(A)\cup\sigma_{rb}(B)\>\>\>\> \mbox{(Theorem \ref{ZHL} )}\\
	& \subseteq \sigma_{b}(M_C)
\end{align*}

	hence $\lambda \in \sigma_{b}(M_C)$, contradiction. Therefore
	$$\lambda\in (acc\sigma_{b}(A)\cup acc\sigma_{b}(B))\backslash acc \sigma_{b}(M_{c})$$
	by theorem \ref{th2} we have $\lambda\in W_{acc\sigma_{b}}$, so $W_{b}\subseteq W_{acc\sigma_{b}}$, according to this inclusion the following implication is hold $$ acc\sigma_{b}(M_C)=acc\sigma_{b}(A)\cup acc\sigma_{b}(B) \Rightarrow \sigma_{b}(M_C)=\sigma_{b}(A)\cup\sigma_{b}(B)$$
	Conversely, if $\sigma_{b}(M_C)=\sigma_{b}(A)\cup\sigma_{b}(B)$ then
	\begin{align*}
		acc (\sigma_{b}(M_C)) &= acc(\sigma_{b}(A)\cup\sigma_{b}(B))\\
		&= acc(\sigma_{b}(A))\cup acc(\sigma_{b}(B)).
	\end{align*}

\end{proof}
The following example shows that the inclusion \ref{inc} used in the proof of theorem \ref{tthh2} may be strict in general.
\begin{example}
Define $U,V\in \mathcal{B}(l^{2})$ by

	$$U e_{n} =e_{n+1}$$
	$$V e_{n+1}= e_{n}$$
	where $\{e_{n}\}_{n\in \mathbb{N}}$ is the orthonormal basis of $l^{2}$. Let us introduce an operator $P:l^{2}\rightarrow l^{2}$ as:
	$$P(x_{1},x_{2},x_{3},...)=(x_{1},0,0,...), \>\>\>\>\>\>\>\>\>\> (x_{1},x_{2},x_{3},...)\in l^{2}.$$
	consider the operator $
	M_C=\begin{pmatrix}
	A & C \\
	0 & B \\
	\end{pmatrix}
	: l^{2}\oplus l^{2}\oplus l^{2}\rightarrow l^{2}\oplus l^{2}\oplus l^{2}
	$, where $A=U$, $B=\begin{pmatrix}
	V & 0 \\
	0 & 0 \\
	\end{pmatrix}: l^{2}\oplus l^{2}\rightarrow l^{2}\oplus l^{2}$ and $C=(P,0): l^{2}\oplus l^{2}\rightarrow l^{2}$.\\
	We have $\sigma_{b}(M_C)=\{\lambda\in\mathbb{C}: |\lambda|=1\}\cup\{0\}$, $\sigma_{b}(A)=\sigma_{b}(B)=\{\lambda\in\mathbb{C}: |\lambda|\leq 1\}$; then $acc \sigma_{b}(M_C)=\{\lambda\in\mathbb{C}: |\lambda|=1\}$, $acc \sigma_{b}(A)=acc \sigma_{b}(B)=\{\lambda\in\mathbb{C}: |\lambda|\leq 1\}$. Consequently
	$$W_{\sigma_{b}}=\{\lambda\in\mathbb{C}: 0<|\lambda|< 1\} \>,\> W_{acc\sigma_{b}}=\{\lambda\in\mathbb{C}: |\lambda|< 1\}.$$
	Thus $W_{\sigma_{b}}\neq W_{acc\sigma_{b}}$.
	
\end{example}
Nevertheless, we have the following theorem.
\begin{theorem}
	Let $(A, B)\in \mathcal{B}(X)\times \mathcal{B}(Y)$  and  $C\in\mathcal{B}(Y, X)$. If $iso \partial W_{\sigma_{b}}=\emptyset$ then $$W_{\sigma_{b}}=W_{acc \sigma_{b}}.$$
\end{theorem}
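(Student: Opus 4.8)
The plan is to prove the two inclusions $W_{\sigma_{b}}\subseteq W_{acc\sigma_{b}}$ and $W_{acc\sigma_{b}}\subseteq W_{\sigma_{b}}$ separately. The first is nothing new: it is precisely inclusion \ref{inc} already obtained inside the proof of Theorem \ref{tthh2} (there $W_{\sigma_{b}}$ is written $W_{b}$). Hence all the content of the statement lies in the reverse inclusion, and it is there that the hypothesis $iso\,\partial W_{\sigma_{b}}=\emptyset$ must be used.

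First I would record the two structural descriptions that make the argument work. By Theorem \ref{ZZL2}, $W_{\sigma_{b}}$ is a union of holes of $\sigma_{b}(M_{C})$, hence disjoint from it, so that $W_{\sigma_{b}}=(\sigma_{b}(A)\cup\sigma_{b}(B))\setminus\sigma_{b}(M_{C})$; being a union of bounded components of the open set $\mathbb{C}\setminus\sigma_{b}(M_{C})$, it is open, so $\partial W_{\sigma_{b}}=\overline{W_{\sigma_{b}}}\setminus W_{\sigma_{b}}$, and since $\overline{W_{\sigma_{b}}}\subseteq\sigma_{b}(A)\cup\sigma_{b}(B)$ (the latter being compact) while $\sigma_{b}(M_{C})\subseteq\sigma_{b}(A)\cup\sigma_{b}(B)$, one gets
$$\partial W_{\sigma_{b}}\subseteq(\sigma_{b}(A)\cup\sigma_{b}(B))\setminus W_{\sigma_{b}}=\sigma_{b}(M_{C}).$$
In the same way, from Theorem \ref{th2}, $W_{acc\sigma_{b}}=(acc\sigma_{b}(A)\cup acc\sigma_{b}(B))\setminus acc\sigma_{b}(M_{C})$.

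The key step is then the following. Take $\lambda\in W_{acc\sigma_{b}}$; then $\lambda\in acc\sigma_{b}(A)\cup acc\sigma_{b}(B)\subseteq\sigma_{b}(A)\cup\sigma_{b}(B)$ and $\lambda\notin acc\sigma_{b}(M_{C})$, and it remains to show $\lambda\notin\sigma_{b}(M_{C})$. Suppose, for contradiction, that $\lambda\in\sigma_{b}(M_{C})$; then $\lambda\in iso\,\sigma_{b}(M_{C})$, so there is $\varepsilon>0$ with $\mu\notin\sigma_{b}(M_{C})$ for every $\mu$, $0<|\mu-\lambda|<\varepsilon$. By Lemma \ref{tkn}(i) $\lambda\in acc(\sigma_{b}(A)\cup\sigma_{b}(B))$, so there are $\mu_{n}\to\lambda$ with $\mu_{n}\neq\lambda$ and $\mu_{n}\in\sigma_{b}(A)\cup\sigma_{b}(B)$; for $n$ large, $0<|\mu_{n}-\lambda|<\varepsilon$, hence $\mu_{n}\notin\sigma_{b}(M_{C})$, so $\mu_{n}\in W_{\sigma_{b}}$ and therefore $\lambda\in\overline{W_{\sigma_{b}}}$. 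As $W_{\sigma_{b}}\cap\sigma_{b}(M_{C})=\emptyset$ and $\lambda\in\sigma_{b}(M_{C})$, we have $\lambda\notin W_{\sigma_{b}}$, so $\lambda\in\partial W_{\sigma_{b}}$; but $\partial W_{\sigma_{b}}\subseteq\sigma_{b}(M_{C})$ contains no $\mu$ with $0<|\mu-\lambda|<\varepsilon$, so $\lambda\in iso\,\partial W_{\sigma_{b}}$, contradicting the hypothesis. Hence $\lambda\notin\sigma_{b}(M_{C})$, i.e. $\lambda\in(\sigma_{b}(A)\cup\sigma_{b}(B))\setminus\sigma_{b}(M_{C})=W_{\sigma_{b}}$, which gives $W_{acc\sigma_{b}}\subseteq W_{\sigma_{b}}$ and hence the claimed equality.

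The argument is elementary planar point-set topology, so I do not anticipate a real difficulty; the only point demanding care is the translation of the hypothesis into something usable — namely that $W_{\sigma_{b}}$ is open and that $\partial W_{\sigma_{b}}\subseteq\sigma_{b}(M_{C})$, both flowing from the ``union of holes'' description in Theorem \ref{ZZL2}, together with the identity $iso\,\sigma_{b}(M_{C})=\sigma_{b}(M_{C})\setminus acc\,\sigma_{b}(M_{C})$, which is what lets us recognize $\lambda$ as an isolated point of $\partial W_{\sigma_{b}}$.
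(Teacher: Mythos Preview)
Your proof is correct, and it is genuinely different from the paper's. You argue the reverse inclusion $W_{acc\sigma_{b}}\subseteq W_{\sigma_{b}}$ directly: given $\lambda\in W_{acc\sigma_{b}}$, you only need $\lambda\notin\sigma_{b}(M_{C})$; assuming the contrary forces $\lambda\in iso\,\sigma_{b}(M_{C})$, and then the nearby points of $\sigma_{b}(A)\cup\sigma_{b}(B)$ land in $W_{\sigma_{b}}$, making $\lambda$ an isolated point of $\partial W_{\sigma_{b}}$ --- a clean contradiction. The only nontrivial input is the openness of $W_{\sigma_{b}}$ and the inclusion $\partial W_{\sigma_{b}}\subseteq\sigma_{b}(M_{C})$, both of which you justify correctly from the ``union of holes'' description.

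The paper instead first uses the hypothesis to obtain $iso\,\sigma_{b}(M_{C})=iso(\sigma_{b}(A)\cup\sigma_{b}(B))$, then spends most of its effort on a case analysis (via Lemma \ref{p1p} and Theorem \ref{ZHL}) pinning down $iso\,\sigma_{b}(M_{C})$ as exactly $(iso\,\sigma_{b}(A)\cap iso\,\sigma_{b}(B))\cup(iso\,\sigma_{b}(A)\cap\rho_{b}(B))\cup(\rho_{b}(A)\cap iso\,\sigma_{b}(B))$, and finally assembles the two disjoint decompositions $\sigma_{b}(A)\cup\sigma_{b}(B)=\sigma_{b}(M_{C})\cup W_{\sigma_{b}}=\sigma_{b}(M_{C})\cup W_{acc\sigma_{b}}$ to conclude. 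Your route is shorter and uses less machinery (no appeal to Theorem \ref{ZHL} or Lemma \ref{p1p}); the paper's route, while longer, yields as a by-product the explicit description of $iso\,\sigma_{b}(M_{C})$ under the hypothesis, which may be of independent interest.
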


\begin{proof}
	According to Lemma \ref{tkn} , and since $iso \partial W_{\sigma_{b}}=\emptyset$, we obtain
	$$iso \sigma_{b}(M_C)=iso(\sigma_{b}(A)\cup \sigma_{b}(B))\subseteq iso\sigma_{b}(A)\cup iso \sigma_{b}(B).$$
	Let $\lambda\in iso \sigma_{b}(M_C)$, then either $\lambda\in iso \sigma_{b}(A)$ or $\lambda\in iso \sigma_{b}(B)$. If $\lambda\in iso \sigma_{b}(A)$, then $A$ has the property $(aB)$ at $\lambda$ and $\lambda I-A$ is not Browder, by Lemma \ref{p1p} , B has also the property $(aB)$ at $\lambda$, thus $\lambda\in iso \sigma_{b}(B)$ or $\lambda\in\rho_{b}(B)$. In contrast, if $\lambda \in iso \sigma_{b}(B)$ we obtain similarly that $\lambda\in iso \sigma_{b}(A)$ or $\lambda\in\rho_{b}(A)$. this means that
	$$iso \sigma_{b}(M_C)\subseteq (iso \sigma_{b}(A)\cap iso \sigma_{b}(B))\cup(iso \sigma_{b}(A)\cap \rho_{b}(B))\cup (\rho_{b}(A)\cap iso \sigma_{b}(B)).$$
	In addition, due to theorem \ref{ZHL} and Lemma \ref{tkn} , we have
	\begin{align*}
		(iso \sigma_{b}(A)\cap iso \sigma_{b}(B))\cup & (iso \sigma_{b}(A)\cap \rho_{b}(B))\cup (\rho_{b}(A)\cap iso \sigma_{b}(B))\\
		 & \subseteq iso \sigma_{b}(A)\cup iso \sigma_{b}(B) \subseteq \partial\sigma_{b}(A)\cup \partial \sigma_{b}(B)\\
		 & \subseteq \sigma_{lb}(A)\cup \sigma_{rb}(B)\subseteq \sigma_{b}(M_C),
		\end{align*}
		and from Lemma \ref{p1p}, we have
		$$ (iso \sigma_{b}(A)\cap iso \sigma_{b}(B))\cup(iso \sigma_{b}(A)\cap \rho_{b}(B))\cup (\rho_{b}(A)\cap iso \sigma_{b}(B))\subseteq acc \sigma_{b}(M_C)^{c},$$
		it follows that $$ (iso \sigma_{b}(A)\cap iso \sigma_{b}(B))\cup(iso \sigma_{b}(A)\cap \rho_{b}(B))\cup (\rho_{b}(A)\cap iso \sigma_{b}(B))\subseteq iso \sigma_{b}(M_C).$$
		Consequently, $ (iso \sigma_{b}(A)\cap iso \sigma_{b}(B))\cup(iso \sigma_{b}(A)\cap \rho_{b}(B))\cup (\rho_{b}(A)\cap iso \sigma_{b}(B))= iso \sigma_{b}(M_C)$, but $iso \sigma_{b}(M_C)\cap(acc \sigma_{b}(A)\cup acc \sigma_{b}(B))=\emptyset$.\\
		From Lemma \ref{p1p}, it follows that
		\begin{align*}
			(iso \sigma_{b}(A)\cup iso \sigma_{b}(B))\backslash iso \sigma_{b}(M_C) &= (iso \sigma_{b}(A)\backslash iso \sigma_{b}(M_C))\cup (iso \sigma_{b}(B)\backslash iso \sigma_{b}(M_C))\\
			&\subseteq acc \sigma_{b}(B)\cup acc \sigma_{b}(A).
			\end{align*}
			These imply that
			\begin{align*}
				\sigma_{b}(A)\cup \sigma_{b}(B) &= (acc \sigma_{b}(A)\cup acc \sigma_{b}(B))\cup (iso \sigma_{b}(A)\cup iso \sigma_{b}(B))\\
				&= acc \sigma_{b}(A)\cup acc \sigma_{b}(B)\cup iso \sigma_{b}(M_C) \cup (iso \sigma_{b}(A)\cup iso \sigma_{b}(B))\backslash iso \sigma_{b}(M_C) \\
				& \subseteq (acc \sigma_{b}(A)\cup acc \sigma_{b}(B))\cup iso \sigma_{b}(M_C)\\
				& \subseteq acc \sigma_{b}(M_C) \cup W_{acc \sigma_{b}} \cup iso \sigma_{b}(M_C).
				\end{align*}
				However
				\begin{align*}
					\sigma_{b}(M_C)\cap W_{acc \sigma_{b}} &= (acc \sigma_{b}(M_C)\cup iso \sigma_{b}(M_C))\cap W_{acc \sigma_{b}}\\
					&= (acc \sigma_{b}(M_C)\cap W_{acc \sigma_{b}})\cup (iso \sigma_{b}(M_C)\cap W_{acc \sigma_{b}})\\
					&= \emptyset.
					\end{align*}
					On the other hand, $\sigma_{b}(A)\cup \sigma_{b}(B)=\sigma_{b}(M_C)\cup W_{\sigma_{b}}=(acc \sigma_{b}(M_C)\cup iso \sigma_{b}(M_C)) \cup W_{ \sigma_{b}}$ and $\sigma_{b}(M_C)\cap W_{ \sigma_{b}}=\emptyset$, this highlight that $W_{ \sigma_{b}}=W_{acc\sigma_{b}}$.
	
	\end{proof}

\end{document}